\newtheorem{remark}{Remark}
\newtheorem{theorem}{Theorem}[section]
\newtheorem{conjecture}{Conjecture}[section]
\newtheorem{proposition}{Proposition}[section]
\newtheorem{question}{Question}
\numberwithin{equation}{section}
\author{Chengyang Shao}
\title{Longtime Dynamics of Irrotational Spherical Water Drops: Initial Notes}
\begin{document}
\maketitle
\begin{spacing}{1.2}
In this note, we propose several unsolved problems concerning the irrotational oscillation of a water droplet under zero gravity. We will derive the governing equation of this physical model, and convert it to a quasilinear dispersive partial differential equation defined on the sphere, which formally resembles the capillary water waves equation but describes oscillation defined on curved manifold instead. Three types of unsolved mathematical problems related to this model will be discussed in observation of hydrodynamical experiments under zero gravity\footnote{There are numbers of visual materials on such experiments conducted by astronauts. See for example \url{https://www.youtube.com/watch?v=H_qPWZbxFl8&t} or \url{https://www.youtube.com/watch?v=e6Faq1AmISI&t}.}: (1) Strichartz type inequalities for the linearized problem (2) existence of periodic solutons (3) normal form reduction and generic lifespan estimate. It is pointed out that all of these problems are closely related to certain Diophantine equations, especially the third one.

\section{Capillary Spherical Water Waves Equation: Derivation}

\subsection{Water Waves Equation for a Bounded Water Drop} 
Comparing to gravity water waves problems, the governing equation for a spherical droplet of water under \emph{zero gravity} takes a very different form. At a first glance it looks similar to the water waves systems as mentioned above, but some crucial differences do arise after careful analysis. To the author's knowledge, besides those dealing with generic free-boundary Euler equation (\cite{CoSh2007}, \cite{ShZe2008}), the only reference on this problem is Beyer-G\"{u}nther \cite{BeGu1998}, in which the local well-posedness of the equation is proved using a Nash-Moser type implicit function theorem. We will briefly discribe known results for gravity water waves problems in the next subsection.

To start with, let us pose the following assumptions on the fluid motion that we try to describe:
\begin{itemize}
    \item (A1) The perfect, irrotational fluid of constant density $\rho_0$ occupies a smooth, compact region in $\mathbb{R}^3$.
    \item (A2) There is no gravity or any other external force in presence.
    \item (A3) The air-fluid interface is governed by the Young-Laplace law, and the effect of air flow is neglected.
\end{itemize}

We assume that the boundary of the fluid region has the topological type of a smooth compact orientable surface $M$, and is described by a time-dependent embedding $\iota(t,\cdot):M\to\mathbb{R}^3$. We will denote a point on $M$ by $x$, the image of $M$ under $\iota(t,\cdot)$ by $M_t$, and the region enclosed by $M_t$ by $\Omega_t$. The outer normal will be denoted by $N(\iota)$. We also write $\bar\nabla$ for the flat connection on $\mathbb{R}^3$.

Adopting assumption (A3), we have the Young-Laplace equation:
$$
\sigma_0 H(\iota)=p_i-p_e,
$$
where $H(\iota)$ is the (scalar) mean curvature of the embedding, $\sigma_0$ is the surface tension coefficient (which is assumed to be a constant), and $p_i,p_e$ are respectively the inner and exterior air pressure at the boundary; they are scalar functions on the boundary and we assume that $p_e$ is a constant. Under assumptions (A1) and (A2), we obtain Bernoulli's equation, sometimes referred as the pressure balance condition, on the evolving surface:
\begin{equation}\label{BernEq}
\left.\frac{\partial\Phi}{\partial t}\right|_{M_t}+\frac{1}{2}|\bar\nabla\Phi|_{M_t}|^2-p_e=-\frac{\sigma_0}{\rho_0}H(\iota),
\end{equation}
where $\Phi$ is the velocity potential of the velocity field of the air. Note that $\Phi$ is determined up to a function in $t$, so we shall leave the constant $p_e$ around for convenience reason that will be explained shortly. According to assumption (A1), the function $\Phi$ is a harmonic function within the region $\Omega_t$, so it is uniquely determined by its boundary value, and the velocity field within $\Omega_t$ is $\bar\nabla\Phi$. The kinematic equation on the free boundary $M_t$ is naturally obtained as
\begin{equation}\label{VelEq}
\frac{\partial\iota}{\partial t}\cdot N(\iota)
=\bar\nabla\Phi|_{M_t}\cdot N(\iota).
\end{equation}

Finally, we would like to discuss the conservation laws for (\ref{BernEq})-(\ref{VelEq}). The preservation of volume $\text{Vol}(\Omega_t)=\text{Vol}(\Omega_0)$ is a consequence of incompressibility. The system describes an Eulerian flow without any external force, so the center of mass moves at a uniform speed along a fixed direction, i.e.
\begin{equation}\label{CenterofMass}
\frac{1}{\mathrm{Vol}(\Omega_0)}\int_{\Omega_t}Pd\mathrm{Vol}(P)=V_0t+C_0,
\end{equation}
with Vol being the Lebesgue measure, $P$ marking points in $\mathbb{R}^3$, $V_0$ and $C_0$ being the velocity and starting position of center of mass respectively. Furthermore, the total momentum is conserved, and since the flow is a potential incompressible one, the conservation of total momentum is expressed as
\begin{equation}\label{ConsMomentum}
\int_{M_t}\rho_0\Phi N(\iota)d\mathrm{Area}(M_t)\equiv\rho_0\mathrm{Vol}(\Omega_0) V_0.
\end{equation}
Most importantly, it is not surprising that (\ref{BernEq})-(\ref{VelEq}) is a Hamilton system (the Zakharov formulation for water waves; see Zakharov \cite{Zakharov1968}), with Hamiltonian
\begin{equation}\label{Hamiltonian}
\sigma_0\mathrm{Area}(\iota)+\frac{1}{2}\int_{\Omega_t}\rho_0|\bar\nabla\Phi|^2d\mathrm{Vol}
=\sigma_0\mathrm{Area}(M_t)+\frac{1}{2}\int_{M_t}\rho_0\Phi|_{M_t}\left(\bar\nabla\Phi|_{M_t}\cdot N(\iota)\right) d\mathrm{Area},
\end{equation}
i.e. potential proportional to surface area plus kinetic energy of the fluid.

\subsection{Converting to a Differential System}
It is not hard to verify that the system (\ref{BernEq})-(\ref{VelEq}) is invariant if $\iota$ is composed with a diffeomorphism of $M$; we may thus regard it as a \emph{geometric flow}. If we are only interested in perturbation near a given configuration, we may reduce system (\ref{BernEq})-(\ref{VelEq}) to a non-degenerate dispersive differential system concerning two scalar functions defined on $M$, just as Beyer and G\"{u}nther did in \cite{BeGu1998}. In fact, during a short time of evolution, the interface can be represented as the graph of a function defined on the initial surface: if $\iota_0:M\to\mathbb{R}^3$ is a fixed embedding close to the initial embedding $\iota(0,x)$, we may assume that $\iota(t,x)=\iota_0(x)+\zeta(t,x)N_0(x)$, where $\zeta$ is a scalar ``height" function defined on $M_0$ and $N_0$ is the outer normal vector field of $M_0$.

\begin{figure}[h]
\centering
\includegraphics[width=0.4\textwidth,angle=0]{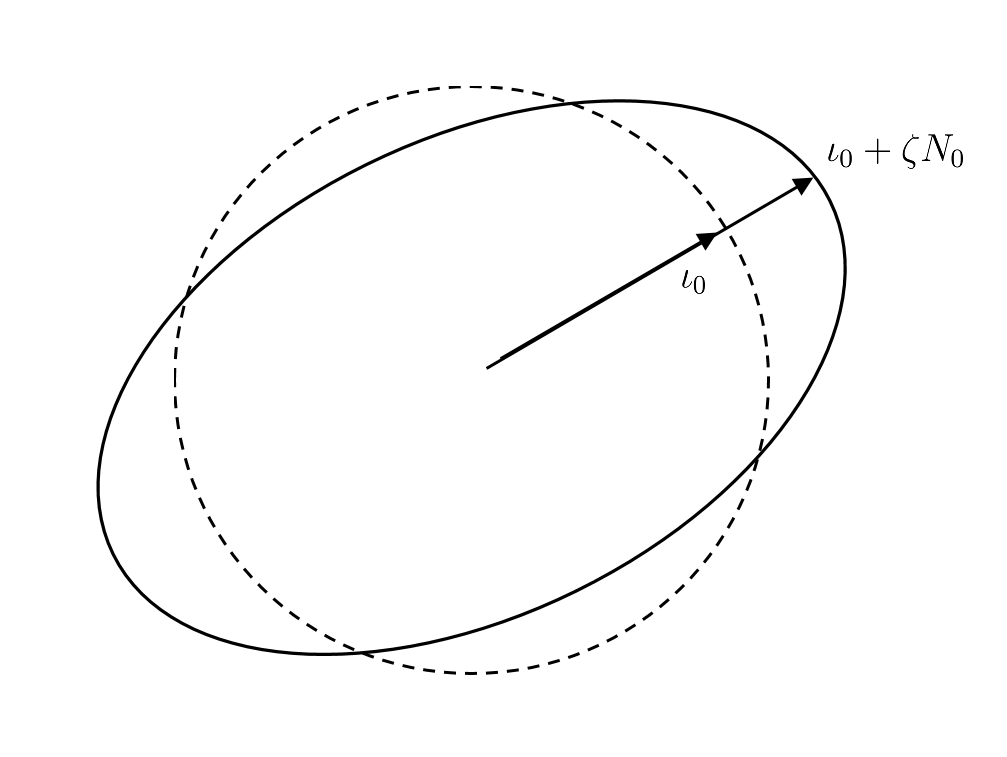}
\caption{The shape of the surface}
\end{figure}

With this observation, we shall transform the system (\ref{BernEq})\&(\ref{VelEq}) into a non-local system of two real scalar functions $(\zeta,\phi)$ defined on $M$, where $\zeta$ is the ``height" function described as above, and $\phi(t,x)=\Phi(t,\iota(t,x))$ is the boundary value of the velocity potential, pulled back to the underlying manifold $M$.

The operator
$$
B_\zeta:\phi\to\bar\nabla\Phi|_{M_t}
$$
maps the pulled-back Dirichlet boundary value $\phi$ to the boundary value of the gradient of $\Phi$. We shall write
$$
(D[\zeta]\phi)N(\iota)
$$
for its normal part, where $D[\zeta]$ is the Dirichlet-Neumann operator corresponding to the region enclosed by the image of $\iota_0+\zeta N_0$. Thus
$$
\frac{\partial\zeta}{\partial t}N_0\cdot N(\iota)=D[\zeta]\phi.
$$

We also need to calculate the restriction of $\partial_t\Phi$ on $M_t$ in terms of $\phi$ and $\iota$. By the chain rule,
$$
\begin{aligned}
\left.\frac{\partial\Phi}{\partial t}\right|_{M_t}
&=\frac{\partial\phi}{\partial t}-\bar\nabla\Phi|_{M_t}\cdot\frac{\partial\iota}{\partial t}\\
&=\frac{\partial\phi}{\partial t}-\left(\bar\nabla\Phi|_{M_t}\cdot N_0\right)\frac{\partial\zeta}{\partial t}\\
&=\frac{\partial\phi}{\partial t}-\frac{1}{N_0\cdot N(\iota)}\left(\bar\nabla\Phi|_{M_t}\cdot N_0\right)\cdot D[\zeta]\phi.
\end{aligned}
$$
We thus arrive at the following nonlinear system:
\begin{equation}\label{EQ(M)}\tag{EQ(M)}
\left\{
\begin{aligned}
\frac{\partial\zeta}{\partial t}
&=\frac{1}{N_0\cdot N(\iota)}D[\zeta]\phi,\\
\frac{\partial\phi}{\partial t}
&=\frac{1}{N_0\cdot N(\iota)}\left(B_\zeta\phi\cdot N_0\right)\cdot D[\zeta]\phi-\frac{1}{2}|B_\zeta\phi|^2-\frac{\sigma_0}{\rho_0}H(\iota)+p_e,
\end{aligned}
\right.
\end{equation}
where $\iota=\iota_0+\zeta N_0$. 

\begin{remark}\emph{
We may obtain an explicit expression of $B_\zeta\phi=\bar\nabla\Phi|_{M_t}$ in terms of $\phi$ (together with the connection $\nabla_0$ on the fixed embedding $\iota_0(M)$), just as standard references did for Euclidean or periodic water waves, but that is not necessary for our discussion at the moment. It is important to keep in mind that the preservation of volume and conservation of total momentum (\ref{CenterofMass})-(\ref{ConsMomentum}) convert to integral equalities of $(\zeta,\phi)$. These additional restrictions are not obvious from the differential equations (\ref{EQ(M)}), though they can be deduced from (\ref{EQ(M)}) since they are just rephrase of the original physical laws (\ref{BernEq})-(\ref{VelEq}). 
}
\end{remark}

For $M=S^2$, the case that we shall discuss in detail, we refer to the system as the \emph{capillary spherical water waves equation}. To simplify our discussion, we shall be working under the center of mass frame, and require the eigenmode $\Pi^{(0)}\phi$ to vanish for all $t$. This could be easily accomplished by absorbing the eigenmode into $\phi$ since the equation is invariant by a shift of $\phi$. In a word, from now on, we will be focusing on the non-dimensional capillary spherical water waves equation
\begin{equation}\label{EQ}\tag{EQ}
\left\{
\begin{aligned}
\frac{\partial\zeta}{\partial t}
&=\frac{1}{N_0\cdot N(\iota)}D[\zeta]\phi,\\
\frac{\partial\phi}{\partial t}
&=\frac{1}{N_0\cdot N(\iota)}\left(B_\zeta\phi\cdot N_0\right)\cdot D[\zeta]\phi-\frac{1}{2}|B_\zeta\phi|^2-H(\iota),
\end{aligned}
\right.
\end{equation}
where $\iota=(1+\zeta)\iota_0$, and $\Pi^{(0)}\phi\equiv0$. We assume that the total volume of the fluid is $4\pi/3$, so that the preservation of volume is expressed as
\begin{equation}\label{Vol}
\frac{1}{3}\int_{S^2}(1+\zeta)^3d\mu_0\equiv\frac{4\pi}{3},
\end{equation}
where $\mu_0$ is the standard measure on $S^2$. The inertial movement of center of mass (\ref{CenterofMass}) and conservation of total momentum (\ref{ConsMomentum}) under our center of mass frame are expressed respectively as
\begin{equation}\label{SphereCons.}
\int_{S^2}(1+\zeta)^4N_0d\mu_0=0,
\quad
\int_{S^2}\phi N(\iota)d\mu(\iota)=0,
\end{equation}
where $\mu(\iota)$ is the induced surface measure. Further, the Hamiltonian of the system is
\begin{equation}\label{SHamilton}
\mathbf{H}[\zeta,\phi]=\mathrm{Area}(\iota)+\frac{1}{2}\int_{S^2}\phi\cdot D[\zeta]\phi \cdot d\mu(\iota),
\end{equation}
and for a solution $(\zeta,\phi)$ there holds $\mathbf{H}[\zeta,\phi]\equiv4\pi$.

Up to this point, we are still working within the realm of well-established frameworks. We already know that the general free-boundary Euler equation is locally well-posed due to the work of \cite{CoSh2007} or \cite{ShZe2008}, and due to the curl equation the curl free condition persists during the evolution. On the other hand, the Cauchy problem of system (\ref{BernEq}) and (\ref{VelEq}) is known to be locally well-posed, due to Beyer and G\"{u}nther in \cite{BeGu1998}. They used an iteration argument very similar to a Nash-Moser type argument in the sense that it involves multiple scales of Banach spaces and ``tame" maps in a certain sense. Finally, it is not hard to transplant the potential-theoretic argument of Wu \cite{Wu1999} to prove the local well-posedness. 

To sum up, we already know that the system (\ref{EQ(M)}) for a compact orientable surface $M$ (hence (\ref{EQ}) specifically) is locally well-posed. But this is all we can assert for the motion of a water droplet under zero gravity. In the following part of this note, we will propose several questions and conjectures concerning the long-time behaviour of water droplets under zero gravity.

\subsection{Previous Works on Water Waves}
There has already been several different approaches to describe the motion of perfect fluid with free boundary. One is to consider the motion of perfect fluid occupying an arbitrary domain in $\mathbb{R}^2$ or $\mathbb{R}^3$, either with or without surface tension. The motion is described by a free boundary value problem of Euler equation. This generic approach was employed by Countand-Shkoller \cite{CoSh2007} and Shatah-Zeng \cite{ShZe2008}. Both groups proved the local-wellposedness of the problem. This approach has the advantage of being very general, applicable to all geometric shapes of the fluid. 

On the other hand, when coming to potential flows of a perfect fluid, the curl free property results in \emph{dispersive} nature of the problem. The motion of a curl free perfect fluid under gravity and a free boundary value condition is usually referred to as \emph{the gravity water waves problem}. The first breakthroughs in understanding local well-posedness were works of Wu \cite{Wu1997} \cite{Wu1999}, who proved local well-posedness of the gravity water waves equation without any smallness assumption. Lannes \cite{Lannes2005} extended this to more generic bottom shapes. Taking surface tension into account, the problem becomes \emph{gravity-capillary water waves}. Schweizer \cite{Sch2005} proved local well-posedness with small Cauchy data of the gravity-capillary water waves problem, and Ming-Zhang \cite{MingZhang2009} proved local well-posedness without smallness assumption. Alazard-Metevier \cite{AM2009} and Alazard-Burq-Zuily \cite{ABZ2011} used para-differential calculus to obtain the optimal regularity for local well-posedness of the water waves equation, either with or without surface tension.

For discussion of long time behavior, it is important to take into account the dispersive nature of the problem. For linear dispersive properties, there has been work of Christianson-Hur-Staffilani \cite{CHS2010}. For gravity water waves living in $\mathbb{R}^2$, works on lifespan estimate include Wu \cite{Wu2009almost} and Hunter-Ifrim-Tataru \cite{HIT2016} (almost global result), Ionescu-Pusateri \cite{IP2015} and Alazard-Delort \cite{AD2015} and Ifrim-Tataru \cite{IT2014} (global result). For gravity water waves living in $\mathbb{R}^3$, there are works of Germain-Masmoudi-Shatah \cite{GMS2012} and Wu \cite{Wu2011} (no surface tension), Germain-Masmoudi-Shatah \cite{GMS2015} (no gravity), Deng-Ionescu-Pausader-Pusateri \cite{DIPP2017} (gravity-capillary water waves) and Wang \cite{Wang2020} (gravity-capillary water waves with finite depth). These results all employed different forms of decay estimates derived from dispersive properties. 

As for long time behavior of periodic water waves, Berti-Delort \cite{BD2018} considered gravity-capillary water waves defined on $\mathbb{T}^1$, Berti-Feola-Pusateri \cite{BFP2018} considered gravity water waves defined on $\mathbb{T}^1$, Ionescu-Pusateri \cite{IoPu2019} considered  gravity-capillary water waves defined on $\mathbb{T}^2$, and obtained an estimate on the lifespan beyond standard energy method. All of the three groups used para-differential calculus and suitable normal form reduction; the results of Berti-Delort and Ionescu-Pusateri were proved for physical data of full Lebesgue measure.

To sum up, all results on the gravity water waves problem listed above are concerned with an equation for two scalar functions defined on a fixed flat manifold, being one of the following: $\mathbb{R}^1$, $\mathbb{R}^2$, $\mathbb{T}^1$, $\mathbb{T}^2$, sometimes called the ``bottom" of the fluid. These two functions represents the geometry of the liquid-gas interface and the boundary value of the velocity potential, respectively. The manifold itself is considered as the bottom of the container in which all dynamics are performed. We observe that the differential equation (\ref{EQ(M)}) is, mathematically, fundamentally different from the water waves equations that have been well-studied.

\section{Initial Notes on Unsolved Problems}
In this section, we propose unsolved problems related to the spherical capillary water waves system (\ref{EQ(M)}) with $M=S^2$. Not surprisingly, these problems all have deep backgrounds in number theory.

\subsection{Linearization Around the Static Solution}
We are mostly interested in the stability of the static solution of (\ref{EQ(M)}). A static solution should be a fluid region whose shape stays still, with motion being a mere shift within the space. In this case, we have $p_e=0$ since the reference is relatively static with respect to the air. Moreover, the velocity field $\bar\nabla\Phi$ and ``potential of acceleration" $\partial\Phi/\partial t$ must both be spatially uniform, so that the left-hand-side of the pressure balance condition (\ref{BernEq}) is a function of $t$ alone. It follows that $\iota(M)$ is always a compact embedded surface of constant mean curvature, hence in fact always an Euclidean sphere by the Alexandrov sphere theorem (see \cite{MP2019}), and we may just take $M=S^2$. Moreover, since $\iota(S^2)$ should enclose a constant volume by incompressibility, the radius of that sphere does not change. After suitable scaling, we may assume that the radius is always 1, and $\rho_0=1$, $\sigma_0=1$, to make (\ref{BernEq})-(\ref{VelEq}) non-dimensional. Finally, by choosing the center of mass frame, we may simply assume that the spatial shift is always zero, so that the velocity potential $\Phi\equiv\Phi_0$, a real constant. It is harmless to fix it to be zero. 

Thus, under our convention, a static solution of (\ref{BernEq})-(\ref{VelEq}) takes the form
\begin{equation}\label{Stat}
\left(
\begin{array}{c}
\iota(t,x)\\
\Phi(t,x)
\end{array}
\right)=
\left(
\begin{array}{c}
\iota_0(x)\\
\displaystyle{
0
}
\end{array}
\right),
\end{equation}
where $a\in\mathbb{R}^3$ is a constant vector, and $\iota_0$ is the standard embedding of $S^2$ as $\partial B(0,1)\subset\mathbb{R}^3$. Equivalently, this means that a static solution of (\ref{EQ(M)}) under our convention must be $(\zeta,\phi)=(0,0)$. Note here that the Gauss map of $\iota_0$ coincides with itself. 

We can now start our perturbation analysis around a static solution at the linear level. Let $\mathcal{E}^{(n)}$ be the space of spherical harmonics of order $n$, normalized according to the standard surface measure on $S^2$. In particular, $\mathcal{E}^{(1)}$ is spanned by three components of $N_0$. Let $\Pi^{(n)}$ be the orthogonal projection on $L^2(S^2)$ onto $\mathcal{E}^{(n)}$, $\Pi_{\leq n}$ be the orthogonal projection on $L^2(S^2)$ onto $\bigoplus_{k\leq n}\mathcal{E}^{(k)}$, $\Pi_{\geq n}$ be the orthogonal projection on $L^2(S^2)$ onto $\bigoplus_{k\geq n}\mathcal{E}^{(k)}$. For $\iota=(1+\zeta)\iota_0$, the linearization of $-H(\iota)$ around the sphere $\zeta\equiv0$ is $\Delta \zeta+2\zeta$, where $\Delta$ is the Laplacian on the sphere $S^2$; cf. the standard formula for the second variation of area in \cite{BD2012}. Then $H'(\iota_0)$ acts on $\mathcal{E}^{(n)}$ as the multiplier $-(n-1)(n+2)$. Note that even if we consider the dimensional form of (\ref{EQ}), there will only be an additional scaling factor $\sigma_0/(\rho_0R^2)$, where $R$ is the radius of the sphere. On the other hand, the following solution formula for the Dirichlet problem on $B(0,1)$ is well-known: if $f\in L^2(S^2)$, then the harmonic function in $B(0,1)$ with Dirichlet boundary value $f$ is determined by
$$
u(r,\omega)=\sum_{n\geq0}r^n(\Pi^{(n)}f)(\omega),
$$
where $(r,\omega)$ is the spherical coordinate in $\mathbb{R}^3$. Thus the Dirichlet-Neumann operator $D[\iota_0]$ acts on $\mathcal{E}^{(n)}$ as the multiplier $n$. Note again that even if we consider the dimensional form (\ref{EQ}), there will only be an additional scaling factor $R^{-1}$. 

Thus, setting
$$
u=\Pi^{(0)}\zeta+\Pi^{(1)}\zeta
+\sum_{n\geq2}\sqrt{(n-1)(n+2)}\cdot\Pi^{(n)}\zeta+i\sum_{n\geq1}\sqrt{n}\cdot\Pi^{(n)}\phi,
$$
we find that the linearization of (\ref{EQ}) around the static solution (\ref{Stat}) is a linear dispersive equation
\begin{equation}\label{EQLin}
\frac{\partial u}{\partial t}+i\Lambda u=0,
\end{equation}
where the $3/2$-order elliptic operator $\Lambda$ is given by a multiplier
$$
\Lambda=\sum_{n\geq2}\sqrt{n(n-1)(n+2)}\cdot \Pi^{(n)}=:\sum_{n\geq0}\Lambda(n)\Pi^{(n)}.
$$
Note that $(\zeta,\phi)$ is completely determined by $u$. At the linear level, there must hold $\Pi^{(0)}u\equiv0$ because the first variation of volume must be zero; and $\Pi^{(1)}u\equiv0$ because of the conservation laws (\ref{SphereCons.}).

Let us also re-write the original nonlinear system (\ref{EQ}) into a form that better illustrates its perturbative nature. For simplicity, we use $O(u^{\otimes k})$ to abbreviate a quantity that can be controlled by $k$-linear expressions in $u$, and disregard its continuity properties for the moment. For example, $\|u\|_{H^1}^2+\|u\|_{H^2}^4$ is an expression of order $O(u^{\otimes2})$ when $u\to0$.

Since the operator $\Lambda$ acts degenerately on $\mathcal{E}^{(0)}\oplus \mathcal{E}^{(1)}$, we should be more careful about the eigenmodes $\Pi^{(0)}u$ and $\Pi^{(1)}u$. The volume preservation equation (\ref{Vol}) implies $\partial_t\Pi^{(0)}\zeta=O(u^{\otimes2})$. Projecting (\ref{EQ}) to $\mathcal{E}^{(1)}$, which is spanned by the components of $N_0$, we obtain $\partial_t\Pi^{(1)}\zeta=\Pi^{(1)}\phi+O(u^{\otimes2})=O(u^{\otimes2})$ since the conservation law (\ref{SphereCons.}) implies $\Pi^{(1)}\phi=O(u^{\otimes2})$; and $\partial_t\Pi^{(1)}\phi=O(u^{\otimes2})$ since $H'(\iota_0)=-\Delta-2$ annihilates $\mathcal{E}^{(1)}$. We can thus formally re-write the nonlinear system (\ref{EQ}) as the following:
\begin{equation}\label{EQFormal}
\frac{\partial u}{\partial t}+i\Lambda u=\mathfrak{N}(u),
\end{equation}
with $\mathfrak{N}(u)=O(u^{\otimes2})$ vanishing quadratically as $u\to0$. Note that we are disregarding all regularity problems at the moment.

\subsection{Question at Linear Level}

At the linear level, our first unanswered question is 
\begin{question}\label{Q1}
Does the solution of the linear capillary spherical water waves equation (\ref{EQLin}) satisfy a Strichartz type estimate of the form
$$
\|e^{it\Lambda}f\|_{L^p_TL^q_x}\lesssim_T\|f\|_{H^s},
$$
where $L^p_TL^q_x=L^p([0,T];L^q(S^2))$, and the admissible indices $(p,q)$ and $s$ should be determined?
\end{question}

Answer to Question \ref{Q1} should be important in understanding the \emph{dispersive} nature of linear capillary spherical water waves. For Schr\"{o}dinger equation on a compact manifold, a widly cited result was obtained by Burq-Gérard-Tzvetkov \cite{BGT2004}: 
\begin{theorem}
On a general compact Riemannian manifold $(M^d,g)$, there holds
$$
\|e^{it\Delta_g}f\|_{L^p_tL^q_x([0,T]\times M)}\lesssim_T\|f\|_{H^{1/p}(M)},
$$
where 
$$
\frac{2}{p}+\frac{d}{q}=\frac{d}{2},
\quad
p>2.
$$
\end{theorem}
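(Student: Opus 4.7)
The plan is to prove this via the now-standard three-step scheme of Littlewood-Paley decomposition, short-time dispersive estimate at semiclassical scale, and summation. Fix $\varphi \in C_c^\infty((1/2,2))$ with $\sum_{j \geq 0}\varphi(2^{-j}\lambda) = 1$ for $\lambda \geq 1$, set $h = 2^{-j}$, and write $P_h := \varphi(h\sqrt{-\Delta_g})$. Decompose $f$ as its spectral projection onto frequencies $\leq 1$, which is controlled trivially by $L^2$-conservation and Sobolev embedding on $M$, plus $\sum_h P_h f$. It suffices to establish the single-scale bound
$$\|e^{it\Delta_g}P_h f\|_{L^p_tL^q_x([0,T]\times M)} \lesssim T^{1/p}h^{-1/p}\|f\|_{L^2},$$
since a Littlewood-Paley square-function summation in $L^p_tL^q_x$ (valid by Minkowski because $p,q \geq 2$) then reassembles the desired bound by $T^{1/p}\|f\|_{H^{1/p}}$.

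The second and central step is the short-time dispersive estimate. In a finite cover of $M$ by coordinate charts, one constructs a WKB parametrix for $e^{it\Delta_g}P_h$ whose Schwartz kernel has the form
$$K_h(t,x,y) = (2\pi h)^{-d}\int_{\mathbb{R}^d} e^{i(\psi(t,x,\xi) - y\cdot\xi)/h}\,a(t,x,\xi;h)\,d\xi,$$
with $\psi$ solving the Hamilton-Jacobi equation $\partial_t\psi + g^{ij}(x)\partial_i\psi\,\partial_j\psi = 0$ and $\psi(0,x,\xi) = x\cdot\xi$, and $a$ determined by successive transport equations. On the semiclassical time window $|t| \leq c_0 h$ no caustics form from any initial covector, the Hessian $\det\partial^2_{\xi\xi}\psi$ remains uniformly non-degenerate, and stationary phase applied to the above oscillatory integral yields $\|K_h(t,\cdot,\cdot)\|_{L^\infty_{x,y}} \lesssim |t|^{-d/2}$, hence the dispersive bound
$$\|e^{it\Delta_g}P_h f\|_{L^\infty(M)} \lesssim |t|^{-d/2}\|f\|_{L^1(M)},\quad 0 < |t| \leq c_0 h.$$

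Given this, the Keel-Tao $TT^*$ machinery interpolates the dispersive bound with the $L^2$-conservation $\|e^{it\Delta_g}P_h f\|_{L^2} = \|P_h f\|_{L^2}$, producing the short-time Strichartz $\|e^{it\Delta_g}P_h f\|_{L^p_tL^q_x(I\times M)} \lesssim \|f\|_{L^2}$ on every interval $I$ of length $c_0 h$, for every admissible pair with $p > 2$. Partitioning $[0,T]$ into $\sim T/h$ such sub-intervals and summing $p$-th powers in time gives the single-scale bound of the first paragraph, completing the reduction.

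The principal obstacle is Step 2: producing $\psi$ by integrating the eikonal equation up to time $c_0 h$ uniformly across the cotangent fibers, verifying uniform non-degeneracy of $\partial^2_{\xi\xi}\psi$ on that window, and driving the transport hierarchy through with symbol errors controlled uniformly in $h$. For $M = S^2$ one should note in passing that the geodesic flow is completely integrable with first conjugate time $\pi$; this is invisible at the semiclassical scale $h < h_0$, but it indicates that one cannot enlarge the dispersive window beyond $c_0 h$ to eliminate the derivative loss, so the factor $h^{-1/p}$ in the single-scale bound is in general sharp. Once the microlocal analysis of Step 2 is in place, the rest is standard Strichartz bookkeeping.
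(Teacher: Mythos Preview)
Your proposal is correct and matches the approach the paper attributes to Burq--G\'erard--Tzvetkov: the paper does not supply its own proof of this theorem but merely cites \cite{BGT2004} and summarizes their method in one line as ``a time-localization argument for the parametrix of $\partial_t-i\Delta_g$''. Your three-step scheme (Littlewood--Paley localization, semiclassical WKB parametrix with short-time dispersive estimate on windows of length $\sim h$, Keel--Tao plus summation over $\sim T/h$ subintervals) is precisely that argument spelled out, so there is nothing to compare.
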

The authors used a time-localization argument for the parametrix of $\partial_t-i\Delta_g$ to prove this result. For the sphere $S^d$, this inequality is not optimal. The authors further used a Bourgain space argument to obtain the optimal Strichartz inequality:
\begin{theorem}
Let $(S^d,g)$ be the standard n-dimensional sphere. For a function $f\in C^\infty(S^d)$, there holds the Strichartz inequality
$$
\|e^{it\Delta_g}f\|_{L^p_tL^q_x([0,T]\times M)}\lesssim_T\|f\|_{H^{s}(M)},\quad s>s_0(d)
$$
where 
$$
s_0(2)=\frac{1}{8},\quad s_0(d)=\frac{d}{4}-\frac{1}{2},\,d\geq3.
$$
Furthermore these inequalities are optimal in the sense that the Sobolev index $s$ cannot be less than or equal to $s_0(d)$.
\end{theorem}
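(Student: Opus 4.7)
The plan is to follow the Bourgain $X^{s,b}$-space strategy of Burq-G\'erard-Tzvetkov, reducing matters to a bilinear eigenfunction estimate on $S^d$ together with a lattice-point count for eigenvalue clusters, and then to match the resulting upper bound against a Knapp-type concentration example for sharpness. I would first set up the Bourgain space adapted to the Schr\"odinger flow on the sphere: decomposing $u(t,x)=\sum_k u_k(t,x)$ with $u_k(t,\cdot)\in\mathcal E^{(k)}$ and $-\Delta_g u_k=\lambda_k^2 u_k$, $\lambda_k^2=k(k+d-1)$, I set
\[
\|u\|_{X^{s,b}}^2=\sum_{k\geq0}(1+k)^{2s}\int_{\mathbb R}(1+|\tau+\lambda_k^2|)^{2b}\,\|\widehat{u_k}(\tau,\cdot)\|_{L^2(S^d)}^2\,d\tau.
\]
For $b>1/2$ a standard computation yields $\|\chi(t/T)e^{it\Delta_g}f\|_{X^{s,b}}\lesssim_T\|f\|_{H^s}$, so it will be enough to establish the linear embedding $X^{s,b}\hookrightarrow L^p_tL^q_x$ at the claimed index $s>s_0(d)$.

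By a $TT^\ast$/duality argument, dyadic decomposition in the spherical-harmonic index, and Khintchine's inequality to handle oscillating signs, I would reduce that embedding to a bilinear product estimate
\[
\|f_ng_m\|_{L^{q/2}(S^d)}\lesssim C(n,m)\|f_n\|_{L^2}\|g_m\|_{L^2},\qquad f_n\in\mathcal E^{(n)},\ g_m\in\mathcal E^{(m)},
\]
summed against the time-integrated phase factor $T\langle T(\lambda_n^2-\lambda_m^2)\rangle^{-1}$. For $d=2$ the crucial instance is the $L^4$ product bound
\[
\|f_ng_m\|_{L^2(S^2)}\lesssim(\min(n,m))^{1/4}\|f_n\|_{L^2}\|g_m\|_{L^2},
\]
which I would obtain by expanding in Clebsch-Gordan coefficients and invoking Sogge's sharp $L^4$ bound $\|f_n\|_{L^4}\lesssim n^{1/8}\|f_n\|_{L^2}$; this is where the exponent $1/8$ enters. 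For $d\geq3$ the analogous inequality follows from Sogge's $L^q$ eigenfunction estimate at the Strichartz-admissible exponent combined with H\"older, producing the $d/4-1/2$ exponent.

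Summation in $(n,m)$ then reduces to a number-theoretic count: the differences $\lambda_n^2-\lambda_m^2=n(n+d-1)-m(m+d-1)$ are integers, and for any fixed $j\in\mathbb Z$ the number of pairs with that difference equal to $j$ grows at most like a divisor function $d(|j|)^{O(1)}$, which is absorbed by the arbitrarily small excess $s-s_0(d)$. For the sharpness claim I would test against the degree-$k$ zonal spherical harmonic $Y_k$: one has $\|Y_k\|_{L^2}\sim 1$, $\|Y_k\|_{H^s}\sim k^s$ and $\|Y_k\|_{L^q(S^d)}\sim k^{\sigma(q)}$ with Sogge's exponent $\sigma(q)$, while $e^{it\Delta_g}Y_k=e^{-i\lambda_k^2 t}Y_k$ is a pure phase rotation, so the left-hand side of the claimed inequality equals $T^{1/p}\|Y_k\|_{L^q}$; matching scales as $k\to\infty$ forces $s\geq s_0(d)$ at the Strichartz-admissible pair.

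The hardest step will be the bilinear spherical-harmonic estimate in dimension two: obtaining the sharp $(\min(n,m))^{1/4}$ bound, together with a fine enough accounting of how often the integer differences $n(n+1)-m(m+1)$ cluster, is the genuine number-theoretic core of the argument and precisely the place where the improvement from the general compact-manifold exponent $s>1/p=1/4$ (at $p=q=4$) down to $s_0(2)=1/8$ actually takes place.
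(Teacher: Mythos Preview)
Your upper-bound strategy is essentially the one the paper sketches (following Burq--G\'erard--Tzvetkov): reduce to the embedding $X^{s,b}\hookrightarrow L^4$, feed in Sogge's $L^2\to L^q$ eigenfunction bounds, and close with an arithmetic count on eigenvalue clusters. One remark: the bilinear bound $\|f_n g_m\|_{L^2(S^2)}\lesssim \min(n,m)^{1/4}\|f_n\|_2\|g_m\|_2$ is strictly stronger than what ``Clebsch--Gordan plus Sogge's $L^4$'' gives you---H\"older alone only produces $(nm)^{1/8}$---but fortunately that weaker exponent already suffices once paired with the divisor count, so this does not break the argument. The paper records the arithmetic input as $\#\{(p,q):p^2+q^2=A\}=O(A^\varepsilon)$, which is what appears when one squares $u$; your formulation via the differences $\lambda_n^2-\lambda_m^2$ is an equivalent bookkeeping.

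The sharpness argument, however, has a genuine gap in dimension two. The zonal harmonic $Y_k$ does \emph{not} saturate Sogge's bound at $q=4$ on $S^2$: since $4$ lies below the critical Sogge exponent $q_c=6$, one has only $\|Y_k\|_{L^4}\sim(\log k)^{1/4}\|Y_k\|_{L^2}$, not $k^{1/8}$. The eigenfunctions that saturate in this range are the highest-weight (Gaussian-beam) harmonics, and substituting those does recover $s\geq 1/8$. But even with that fix, a single eigenfunction can only show $s\geq s_0(d)$, never that the endpoint $s=s_0(d)$ \emph{fails}, which is what the theorem asserts. For that the paper, following BGT, invokes Gauss sums: one tests against a coherent superposition of roughly $N$ eigenmodes so that at suitable rational times the phases $e^{-ik(k+d-1)t}$ align constructively and the $L^4_{t,x}$ norm acquires an extra divergent factor. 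Your single-mode test cannot detect this phenomenon, so as written the optimality half of the theorem remains unproved.
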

The proof is the consequence of two propositions. The first one is the ``decoupling inequality on compact manifolds", in particular the following result proved by Sogge \cite{Sogge1988}:
\begin{proposition}
Let $\Pi_k$ be the spectral projection to eigenspaces with eigenvalues in $[k^2,(k+1)^2]$ on $S^d$. Then there holds
$$
\|\Pi_k\|_{L^2\to L^q}
\leq C_qn^{s(q)},
$$
where
$$
s(q)=\left\{
\begin{matrix}
\frac{d-1}{2}\left(\frac{1}{2}-\frac{1}{2q}\right),\quad & 2\leq q\leq\frac{2(d+1)}{d-1}\\
\frac{d-1}{2}-\frac{d}{q},\quad & \frac{2(d+1)}{d-1}\leq q\leq\infty.
\end{matrix}
\right.
$$
These estimates are sharp in the following sense: if $h_k$ is a zonal spherical harmonic function of degree $k$ on $S^d$, then as $k\to\infty$,
$$
\|h_k\|_{L^q}\simeq C_qk^{s(q)}\|h_k\|_{L^2}.
$$
\end{proposition}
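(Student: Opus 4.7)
The plan is to follow Sogge's original 1988 approach: reduce the spectral cluster bound to a short-time analysis of the half-wave propagator, construct a Fourier integral operator parametrix, and apply a $TT^*$ argument combined with Stein--Tomas restriction estimates. First, since the eigenvalues of $\sqrt{-\Delta_{S^d}}$ on the degree-$n$ spherical harmonic space are $\sqrt{n(n+d-1)}$, which lie within $O(1)$ of the integer $n$, I fix $\chi\in\mathcal{S}(\mathbb{R})$ with $\chi\geq 1$ on $[-1,1]$ and $\hat\chi$ supported in $(-\varepsilon,\varepsilon)$ for some $\varepsilon$ smaller than the injectivity radius of $S^d$. Then $\|\Pi_k f\|_{L^q}\lesssim \|\chi(\sqrt{-\Delta_{S^d}}-k)f\|_{L^q}$, and Fourier inversion gives
\[
\chi(\sqrt{-\Delta_{S^d}}-k) = \frac{1}{2\pi}\int_{-\varepsilon}^{\varepsilon} \hat\chi(t)\,e^{-ikt}\,e^{it\sqrt{-\Delta_{S^d}}}\,dt,
\]
transferring the problem to the half-wave group on a small time window where only the local geometry of the sphere matters.

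Next, I construct the Lax--H\"ormander parametrix for $e^{it\sqrt{-\Delta_{S^d}}}$ on $|t|<\varepsilon$, realising it as a classical FIO whose phase function solves the eikonal equation with initial datum $\langle x-y,\xi\rangle$ and whose amplitude is a classical symbol of order zero. Substituting into the Fourier inversion formula and applying stationary phase in the $t$-variable about the critical point $|\xi|=k$, the Schwartz kernel of $\chi(\sqrt{-\Delta_{S^d}}-k)$ takes the model form
\[
K_k(x,y)\,\sim\, k^{(d-1)/2}\,\frac{e^{\pm i k\,\mathrm{dist}(x,y)}}{(1+k\,\mathrm{dist}(x,y))^{(d-1)/2}} + R_k(x,y),
\]
with a rapidly decreasing remainder $R_k$: an oscillatory kernel of Bessel type concentrated on a $1/k$-tubular neighbourhood of the wave cone.

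From this kernel the two regimes of $s(q)$ are separated by a $TT^*$ argument applied to $T_k:=\chi(\sqrt{-\Delta_{S^d}}-k)$. For $2\leq q\leq 2(d+1)/(d-1)$, the kernel of $T_kT_k^*$ is an oscillatory integral whose phase satisfies the Carleson--Sj\"olin rotational curvature hypothesis uniformly in $k$, so the Stein--Tomas restriction theorem for the sphere in $\mathbb{R}^d$ applies and yields the rate $s(q)=\frac{d-1}{2}(\frac{1}{2}-\frac{1}{2q})$ after tracking the scaling factor $k$. For $q\geq 2(d+1)/(d-1)$, I combine the pointwise bound $\|K_k\|_{L^\infty}\lesssim k^{(d-1)/2}$, which follows from the fact that $\Pi_k(x,x)$ equals the dimension of the degree-$k$ spherical harmonic space divided by $|S^d|$ and is thus $O(k^{d-1})$, with the critical Stein--Tomas exponent by Riesz--Thorin interpolation, producing $s(q)=\frac{d-1}{2}-\frac{d}{q}$. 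Sharpness is verified by explicit extremal test functions: the zonal harmonic $C_k^{(d-1)/2}(\cos\theta)$ concentrates at two antipodal poles with peak amplitude $\sim k^{(d-1)/2}$ and saturates the bound for $q$ above the critical exponent (by standard Jacobi polynomial asymptotics), while the highest-weight harmonic $\mathrm{Re}(x_1+ix_2)^k$ concentrates as a Gaussian beam of width $k^{-1/2}$ on a great circle and saturates the bound for $q$ below the critical exponent.

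The main obstacle is the Stein--Tomas step at the critical exponent $q=2(d+1)/(d-1)$: one must verify that the phase of $T_kT_k^*$ satisfies the rotational curvature condition uniformly in $k$ so that the sphere-in-$\mathbb{R}^d$ restriction inequality applies verbatim, and check that the smoothing errors generated by the parametrix construction are genuinely of lower order than $k^{s(q)}$ in $L^2\to L^q$ operator norm. Once this technical step is handled, the interpolation argument and the saturation computation with Gegenbauer polynomials and highest weight vectors are reductions to classical special function asymptotics.
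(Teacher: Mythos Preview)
The paper does not actually prove this proposition: it is quoted verbatim as a result of Sogge \cite{Sogge1988} and used as a black-box ingredient in the Bourgain-space argument for Strichartz estimates. There is therefore no ``paper's own proof'' to compare your attempt against.

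That said, your sketch is a faithful outline of Sogge's original argument: the reduction of $\Pi_k$ to $\chi(\sqrt{-\Delta}-k)$ via a compactly Fourier-supported bump, the short-time FIO parametrix for the half-wave group, stationary phase in $t$ to extract the oscillatory kernel at frequency $k$, and then the $TT^*$ / Carleson--Sj\"olin step at the Stein--Tomas endpoint followed by interpolation against the trivial $L^2\to L^\infty$ bound. This is exactly the architecture of \cite{Sogge1988}, and your identification of the main technical point (uniform rotational curvature of the phase of $T_kT_k^*$ at the critical exponent) is correct.

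One remark on sharpness: your discussion is in fact more accurate than the proposition as stated. Zonal harmonics only saturate $s(q)$ in the range $q\geq 2(d+1)/(d-1)$; below the Stein--Tomas exponent the extremisers are the highest-weight harmonics $(x_1+ix_2)^k$ concentrating on a closed geodesic, precisely as you say. The proposition's claim that $\|h_k\|_{L^q}\simeq k^{s(q)}\|h_k\|_{L^2}$ for \emph{all} $q\in[2,\infty]$ with $h_k$ zonal is not quite right in the sub-critical range, and indeed the two formulas for $s(q)$ as printed do not even agree at the transition exponent (there is a typo: the first line should read $\tfrac{d-1}{2}\bigl(\tfrac{1}{2}-\tfrac{1}{q}\bigr)$). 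Your version of the sharpness statement is the correct one.
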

The second one is a Bourgain space embedding result:
\begin{proposition}
For a function $f\in C^\infty_0(\mathbb{R}\times S^d)$, define the Bourgain space norm
$$
\|f(t,x)\|_{X^{s,b}}
:=\left\|\langle\partial_t+i\Delta_g\rangle^bf(t,x)\right\|_{L^2_tH^s_x}.
$$
Then for $b>1/2$ and $s>s_0(d)$, there holds
$$
\|f\|_{L^4(\mathbb{R}\times S^d)}\leq C_{s,b}\|f\|_{X^{s,b}}.
$$
\end{proposition}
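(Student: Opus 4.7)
The plan is to establish the embedding via a bilinear reduction combined with Sogge's projector bound. Starting from the identity $\|f\|_{L^4(\mathbb{R}\times S^d)}^2 = \|f^2\|_{L^2(\mathbb{R}\times S^d)}$ and applying Plancherel in the time variable, the $L^4$ norm of $f$ is expressed as the $L^2_\tau L^2_x$ norm of the convolution $\int \tilde f(\tau_1, x)\,\tilde f(\tau - \tau_1, x)\,d\tau_1$, where $\tilde f$ denotes the temporal Fourier transform. Decomposing $\tilde f(\tau, \cdot) = \sum_n \Pi_n \tilde f(\tau, \cdot)$ in spherical harmonics and using that $\Delta_g$ acts on $\mathcal{E}^{(n)}$ as multiplication by $-\lambda_n$ with $\lambda_n := n(n+d-1)$, one finds
$$\|f\|_{X^{s,b}}^2 \;\asymp\; \sum_n \int_{\mathbb{R}} \langle \tau - \lambda_n\rangle^{2b}\,(1+n^2)^s\,\|\Pi_n \tilde f(\tau,\cdot)\|_{L^2_x}^2\,d\tau.$$
The problem is thereby reduced to estimating, for each pair $(m, n)$, the $L^2(S^d)$ norm of the product $\Pi_m u \cdot \Pi_n v$.

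The crux, and the main obstacle, is a sharp bilinear product estimate of the form $\|\Pi_m u \cdot \Pi_n v\|_{L^2(S^d)} \le C(m,n)\,\|u\|_{L^2}\|v\|_{L^2}$. The naive combination of Sogge's Proposition with Cauchy--Schwarz yields $C(m,n) \sim (mn)^{s(4)}$, but for $d = 2$ this exponent $s(4) = 3/16$ strictly exceeds the target threshold $s_0(2) = 1/8$, so a refined analysis is unavoidable. The required sharpening exploits the fact that the eigenvalues $\lambda_n$ of $-\Delta_g$ on $S^d$ are integers (after rescaling): after expanding in $\tau_1$, pairs $(m,n)$ contribute essentially orthogonally to distinct sums $\lambda_m + \lambda_n$, and the number of lattice pairs producing a given sum $\ell$ is controlled by divisor-type bounds. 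This is a concrete point where number theory enters, in the spirit of the theme emphasized in the introduction, and it is precisely this counting that allows one to replace the coarse Sogge exponent by $s_0(d)$.

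With such a bilinear bound in hand, the remainder is routine. One applies Cauchy--Schwarz to the $\tau_1$ integral against the weight $\langle \tau_1 - \lambda_m\rangle^{-2b}\langle \tau - \tau_1 - \lambda_n\rangle^{-2b}$, whose integrability requires exactly $b > 1/2$; the resulting bound is then summed dyadically over $(m,n)$, producing a geometric series that converges precisely when $s > s_0(d)$. An equivalent route, perhaps conceptually cleaner, is to prove first the linear Strichartz estimate $\|e^{it\Delta_g}u_0\|_{L^4([0,1]\times S^d)} \lesssim \|u_0\|_{H^s}$ for $s > s_0(d)$ through the same bilinear/number-theoretic analysis, and then invoke the classical transference principle to lift it to the Bourgain space embedding $X^{s,b}\hookrightarrow L^4$.
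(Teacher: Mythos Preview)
Your proposal is correct and follows essentially the same approach the paper indicates: the paper does not supply a self-contained proof here but cites \cite{BGT2004} and singles out as the key ingredient exactly the divisor-type bound $\#\{(p,q)\in\mathbb{N}^2:p^2+q^2=A\}=O(A^\varepsilon)$, which is precisely the number-theoretic counting you invoke to control the pairs $(m,n)$ with $\lambda_m+\lambda_n$ fixed after Parseval in time. Your sketch is in fact more detailed than what the paper provides.
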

The key ingredient for proving this proposition is the following number-theoretic result:
$$
\#\{(p,q)\in\mathbb{N}^2:p^2+q^2=A\}=O(A^\varepsilon).
$$
As for optimality of the Strichartz inequality, the authors of \cite{BGT2004} implemented standard results of Gauss sums.

The parametrix and Bourgain space argument can be repeated without essential change for the linear capillary spherical water waves equation (\ref{EQLin}), but this time the Bourgain space argument would be more complicated: the Bourgain space norm is now
$$
\|f(t,x)\|_{X^{s,b}}
:=\left\|\langle\partial_t+i\Lambda\rangle^bf(t,x)\right\|_{L^2_tH^s_x},
$$
and the embedding result becomes
$$
\|f\|_{L^4(\mathbb{R}\times S^2)}
\leq C_{s,b}\|f\|_{X^{s,b}}
$$
for $f\in C^\infty_0(\mathbb{R}\times S^2)$ and all $s>3\rho/8+1/8$, $b>1/2$, where $\rho$ is the infimum of all exponents $\rho'$ such that when $A\to\infty$, the number
$$
\#\left\{
(n_1,n_2)\in\mathbb{N}^2:\,\frac{1}{2}\leq\frac{n_2}{n_1}\leq2,\,
|\Lambda(n_1)+\Lambda(n_2)-A|\leq\frac{1}{2}
\right\}
\leq C_{\rho'}A^{\rho'}.
$$
Some basic analytic number theory implies $\rho=1/3$, and thus the range of $s$ is $s>1/4$. Surprisingly this index is not better than that predicted by the parametrix method. It remains unknown whether this index could be further optimized.

To close this subsection, we note that the capillary spherical water wave lives on a compact region, so the dispersion does not take away energy from a locality to infinity. This is a crucial difference between waves on compact regions and waves in Euclidean spaces. In particular, we do not expect decay estimate for $e^{it\Lambda}f$. For the nonlinear problem (\ref{EQ}), techniques like vector field method (Klainerman-Sobolev type inequalities) do not apply.

\subsection{Rotationally Symmetric Solutions: Bifurcation Analysis}

Illuminated by observations in hydrodynamical experiments under zero gravity, and suggested by the existence of standing gravity capillary water waves due to Alazard-Baldi \cite{AB2015}, we propose the following conjecture:

\begin{conjecture}
There is a Cantor family of small amplitude periodic solutions to the spherical capillary water waves system (\ref{EQ}).
\end{conjecture}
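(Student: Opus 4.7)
The plan is to follow the Lyapunov--Schmidt/Nash--Moser strategy developed by Alazard--Baldi \cite{AB2015} for standing gravity--capillary waves, adapted to the spherical geometry. The subsection title suggests first reducing to axisymmetric solutions: fix an axis in $\mathbb{R}^{3}$ and restrict to those $(\zeta,\phi)$ invariant under rotations about it. This subspace is invariant under \eqref{EQ}, and each eigenspace $\mathcal{E}^{(n)}$ meets it in the one-dimensional line spanned by the zonal spherical harmonic $Y_{n}^{0}$; the spectrum of $\Lambda$ on this subspace is therefore the simple sequence $\omega_{n}=\sqrt{n(n-1)(n+2)}$, $n\ge 2$. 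I would also retain the reversible Hamiltonian structure $u(t)\mapsto \overline{u(-t)}$ inherited from $t\mapsto -t$, $\phi\mapsto -\phi$, since reversibility eliminates odd-order Melnikov resonances.

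Next I would set up the bifurcation. For a fixed $n_{*}\ge 2$, the linear orbit $\epsilon Y_{n_{*}}^{0}(x) e^{i\omega_{n_{*}}t}$ is the natural candidate. Rescaling $u=\epsilon v$ and writing $\omega=\omega_{n_{*}}(1+\alpha)$ with $\alpha$ a small free parameter, I look for $2\pi$-periodic $v(\varphi,x)$ solving
$$\omega\,\partial_{\varphi}v + i\Lambda v = \epsilon\,\mathfrak{N}_{\epsilon}(v),$$
where $\mathfrak{N}_{\epsilon}(v)$ is quadratic in $v$ and of order $3/2$ after paralinearization of $D[\zeta]$ and of the mean curvature $H(\iota)$. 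Decompose $v = c\,e^{i\varphi}Y_{n_{*}}^{0} + \overline{c\,e^{i\varphi}Y_{n_{*}}^{0}} + w$ with $w$ in the range complement of $\omega\partial_{\varphi}+i\Lambda$. The range equation is solved for $w=w(\epsilon,\alpha,c)$ by a Nash--Moser iteration, and the remaining one-dimensional kernel equation is then solved for $\alpha=\alpha(\epsilon,c)$ by the implicit function theorem, once a non-vanishing fourth-order Birkhoff twist coefficient has been verified by a direct normal form computation.

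The main obstacle is the invertibility of the linearized operator $\mathcal{L}_{\epsilon,\omega}=\omega\partial_{\varphi}+i\Lambda+\epsilon\mathcal{R}(\epsilon,v)$ on the range complement, with tame estimates. The recipe is to paralinearize the coefficients and then conjugate $\mathcal{L}_{\epsilon,\omega}$ by a finite sequence of para-changes of variable on $S^{2}$ (a spherical analogue of the Alinhac good unknown and the Alazard--Baldi symmetrizers) to reduce it to a constant-coefficient operator plus a smoothing remainder, diagonal in the spherical harmonic basis. Invertibility then demands the first and second Melnikov conditions
$$|\omega k - \omega_{n}|\ge \gamma\langle k\rangle^{-\tau},\qquad |\omega k \pm \omega_{n_{1}}\pm\omega_{n_{2}}|\ge \gamma(|k|+n_{1}+n_{2})^{-\tau}$$
for all admissible $k,n,n_{1},n_{2}$ away from the obvious resonant locus. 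The hard part will be the measure estimate: showing that the bad set of parameters has small Lebesgue measure reduces to bounding how close integer combinations of $\sqrt{m(m-1)(m+2)}$ can lie to an integer multiple of $\omega$, a Diophantine question on the image of the cubic $m\mapsto m(m-1)(m+2)$ under square root. Since $\omega_{m}=m^{3/2}+O(m^{1/2})$, sufficient quantitative irrationality should be within reach by a Borel--Cantelli style estimate in the spirit of Berti--Delort \cite{BD2018}.

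Assembling the Nash--Moser iteration in Whitney-smooth form and passing to the limit then yields, for each $n_{*}\ge 2$, a Cantor family $\{(\epsilon,v_{\epsilon})\}$ of small-amplitude $2\pi/\omega(\epsilon)$-periodic solutions of \eqref{EQ}. The two places where the spherical geometry makes the argument genuinely different from the flat torus case are (i) the paradifferential calculus on $S^{2}$, where symbolic composition and remainder estimates must be reformulated in the spherical harmonic rather than Fourier framework, and (ii) the small divisor measure estimate, which cannot borrow lattice arguments on $\mathbb{Z}^{d}$. I expect (ii) to be the true bottleneck.
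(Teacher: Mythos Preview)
The statement is a \emph{conjecture}; the paper does not prove it but offers a bifurcation analysis explaining why it should hold and what the obstacles are. Your outline follows essentially the same strategy the paper advocates---reduce to axisymmetric (zonal) solutions, linearize, and transplant the Alazard--Baldi Nash--Moser machinery---so at the level of architecture you are aligned with the paper.

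There is, however, a genuine gap in your setup that the paper singles out as the first nontrivial point. You fix $n_{*}\ge 2$, set $\omega=\omega_{n_{*}}(1+\alpha)$, and then refer to ``the remaining one-dimensional kernel equation'' as if this were automatic. It is not. With the reversibility and parity constraints in place, the kernel of $\omega_{0}\partial_{\varphi}+i\Lambda$ on zonal modes consists of all pairs $(j,n)$ with $\omega_{0}^{2}j^{2}=n(n-1)(n+2)$, $j\ge 1$, $n\ge 2$. Requiring this Diophantine equation to have exactly one solution is a nontrivial arithmetic condition: the paper shows it is equivalent to finding a unique integral point of prescribed divisibility on the elliptic curve $E_{ab}:y^{2}=x(x-ab)(x+2ab)$, where $\omega_{0}^{2}=a/b$, and exhibits concrete admissible choices (e.g.\ $n_{0}=6$ with $ab=15$) only after a computer search. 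Your choice $\omega_{0}=\omega_{n_{*}}$ (i.e.\ $j_{0}=1$) does not in general force uniqueness, and for many $n_{*}$ the kernel will have dimension $>1$, which breaks the Lyapunov--Schmidt reduction as you have written it. You need to impose the elliptic-curve condition on $(\omega_{0},n_{0},j_{0})$ explicitly before proceeding; this is the content of the paper's refined Conjecture~\ref{Conj1} and Question~\ref{Q2}.

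A secondary discrepancy: you flag the small-divisor measure estimate as ``the true bottleneck,'' whereas the paper identifies the conjugation step---reducing the linearized operator to $\omega\partial_{t}+i(T_{3/2}+T_{1/2}+T_{-1/2})+(\text{order }\le -3/2)$ with $T_{k}$ Fourier multipliers on spherical harmonics---as the greatest technical issue, precisely because the symbolic calculus for pseudo- and para-differential operators on $S^{2}$ with rough coefficients is not available in the literature and would have to be developed from scratch.
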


Let us conduct the bifurcation analysis that suggests why this conjecture should be true. By time rescaling, we aim to find solution $(\zeta,\phi,\omega_0)$ of the following system that is $2\pi$-peiodic in $t$:
\begin{equation}\label{Periodic}
\left\{
\begin{aligned}
\omega_0\frac{\partial\zeta}{\partial t}
&=\frac{1}{N_0\cdot N(\iota)}D[\zeta]\phi,\\
\omega_0\frac{\partial\phi}{\partial t}
&=\frac{1}{N_0\cdot N(\iota)}\left(B_\zeta\phi\cdot N_0\right)\cdot D[\zeta]\phi-\frac{1}{2}|B_\zeta\phi|^2-H(\iota),
\end{aligned}
\right.
\end{equation}
together with the conservation laws (\ref{Vol})-(\ref{SHamilton}). Here we refer $\omega_0>0$ as the \emph{fundamental frequency}. The linearization of this system at the equilibrium $(\zeta,\phi)=(0,0)$ is
\begin{equation}
L_{\omega_0} \left(\begin{matrix}
\zeta \\
\phi
\end{matrix}\right):=
\left(\begin{matrix}
\omega_0\partial_t & -D[0] \\
-\Delta-2 & \omega_0\partial_t
\end{matrix}\right)
\left(\begin{matrix}
\zeta \\
\phi
\end{matrix}\right)=0,
\quad 
\Pi^{(0)}\zeta=\Pi^{(1)}\zeta=\Pi^{(1)}\phi=0.
\end{equation}
We restrict to \emph{rotationally symmetric} solutions of the system: that is, water droplets which are always rotationally symmetric with a fixed axis. In addition, we require $\zeta$ to be even in $t$ and $\phi$ to be odd in $t$. The solution thus should take the form
$$
\zeta(t,x)=\sum_{j,n\geq0}\zeta_{jn}\cos(jt) Y_n(x),
\quad
\phi(t,x)=\sum_{j\geq1,n\geq0}\phi_{jn}\sin(jt) Y_n(x),
$$
where $Y_n$ is the $n$'th zonal spherical harmonic, i.e. the (unique) normalized spherical harmonic of degree $n$ that is axially symmetric. In spherical coordinates this means that $Y_n(\theta,\varphi)=P_n(\cos\theta)$, where $P_n$ is the $n$'th Legendre polynomial. Since $\phi_{0n}$ are irrelevant we fix them to be 0. Then
$$
L_{\omega_0} \left(\begin{matrix}
\zeta \\
\phi
\end{matrix}\right)
=\sum_{j,n\geq0}\left(\begin{matrix}
(-\omega_0 j\zeta_{jn}-n\phi_{jn})\sin(jt)Y_n(x) \\
\left((n-1)(n+2)\zeta_{jn}+\omega_0 j\phi_{jn}\right)\cos(jt)Y_n(x)
\end{matrix}\right).
$$
In order that $(\zeta,\phi)^\mathrm{T}\in\mathrm{Ker}L_{\omega_0}$, at the level $n=0$, we must have $\zeta_{j0}=\phi_{j0}=0$ for all $j\geq0$. At the level $n=1$, we have $\zeta_{01}=\phi_{01}=0$, and for $j\geq1$ there holds $\omega_0 j\zeta_{j1}-\phi_{j1}=0$ and $\omega_0 j\phi_{j1}=0$, so $\zeta_{j1}=\phi_{j1}=0$ for all $j\geq0$. Hence $\zeta_{jn},\phi_{jn}$ can be nonzero only for $j\geq1$ and $n\geq2$.

Consequently, $L_{\omega_0}$ has a one-dimensional kernel if and only if the Diophantine equation
\begin{equation}\label{EllipticCurve}
\omega_0^2j^2=n(n-1)(n+2),\quad j\geq1,\,n\geq2
\end{equation}
has exactly one solution $(j_0,n_0)$. If $\omega_0$ has this property, then at the linear level, the lowest frequency of oscillation is
$$
\omega_0 j_0=\sqrt{n_0(n_0-1)(n_0+2)}.
$$

We look into this Diophantine equation. Obviously $\omega_0^2$ has to be a rational number. The equation is closely related to a family of elliptic curves over $\mathbb{Q}$:
$$
E_c:y^2=x(x-c)(x+2c)=x^3+c^2x^2-2c^2x,
\quad c\in\mathbb{N}.
$$
If we set $a/b=\omega_0^2$ (irreducible fraction), then integral solutions of (\ref{EllipticCurve}) are in 1-1 correspondence with integral points with natural number coordinates on the elliptic curve $E_{ab}$, under the following map:
$$
(j_0,n_0)\to(abn_0,a^2bj_0)\in E_{ab}.
$$
Thus we just need to find natural numbers $a,b$ such that there is a unique (up to negation) integral point $(x,y)\in E_{ab}$, where $x>0$ is divided by $ab$, and $y$ is divided by $a^2b$. We seek for $n_0$ as small as possible with such property, which gives lowest frequency of oscillation as small as possible. For $ab=1,\cdots,50$, we find that if $ab=15$, then the integral points on elliptic curve $E_{15}$ (up to negation of Mordel-Weil group) are 
$$
(-30 , 0 ),\, (-5 , 50 ),\, (0 , 0 ), \, (15 ,0 ),\, (24 , 108 ),\, (90 , 900).
$$
The only point $(x,y)$ with $ab|x$ and $ab^2|y$ is (90,900), which gives $n_0=6$, and the lowest frequency $\Lambda(n_0)=\sqrt{n_0(n_0-1)(n_0+2)}$ of oscillation is $4\sqrt{15}\simeq15.49\cdots$. 

There are other choices of $a,b$. We list down the value of $ab$ below 50, the corresponding $n_0$ and the lowest frequency $\Lambda(n_0)$:
\begin{center}
    \begin{tabular}{lll}
        $ab$ & $n_0$ & $\Lambda(n_0)$ \\
        15 & 6 & $4\sqrt{15}$ \\
        17 & 49 & $4\sqrt{323}$ \\
        22 & 9 & $6\sqrt{22}$ \\
        26 & 50 & $15\sqrt{78}$ \\
        42 & 7 & $3\sqrt{42}$ \\
        46 & 576 & $2040\sqrt{46}$ \\
        50 & 25 & $90\sqrt{2}$
    \end{tabular}
\end{center}

See Appendix \ref{A} for the MAGMA code used to find these values. This list suggests that $n_0=6$ might be the smallest order that meets the requirement, but this remains unproved. We summarize these into the following number-theoretic question:

\begin{question}\label{Q2}
For the family of elliptic curves
$$
E_{ab}:\,y^2=x(x-ab)(x+2ab),\quad a,b\in\mathbb{N},
$$
how many choices of $a,b\in\mathbb{N}$ are there such that, there is exactly one integral point $(x,y)\in E_{ab}$ with $x,y>0$ and $ab | x$, $a^2b | y$? For such $a,b$ and integral point $(x,y)$, is the minimal value of $x/(ab)$ exactly 6, or is it smaller?
\end{question}

Of course, a complete answer of Question \ref{Q2} should imply very clear understanding of periodic solutions of the spherical capillary water waves equation constructed using bifurcation analysis. But at this moment we are satisfied with existence, so we may pick any $\omega_0=a/b$ and $(j_0,n_0)$ that meets the requirement, for example the simplest case $n_0=6$, and any of the following choices of $\omega_0$ and $j_0$:
$$
\omega_0=\sqrt{15},\,j_0=4;
\quad
\omega_0=\sqrt{\frac{1}{15}},\,j_0=60;
\quad
\omega_0=\sqrt{\frac{3}{5}},\,j_0=20;
\quad
\omega_0=\sqrt{\frac{5}{3}},\,j_0=12.
$$
We thus refine our conjecture as follows:

\begin{conjecture}\label{Conj1}
Let $(n_0,j_0)$ be a pair of natural numbers with $n_0\geq2$, $j_0\geq1$, and set $\omega_0=\sqrt{\Lambda(n_0)}/j_0$. Suppose that the only natural number solution of the Diophantine equation
$$
\omega_0^2j_0=\Lambda(n_0)^2=n_0(n_0-1)(n_0+1)
$$
is $(j_0,n_0)$. Then there is a Cantor set with positive measure of parameters $\omega$, clustered near $\omega_0$, such that the spherical capillary water waves equation (\ref{Periodic}) admits small amplitude periodic solution with frequency $\omega$.
\end{conjecture}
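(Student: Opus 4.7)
The plan is to adapt the Nash--Moser bifurcation scheme developed by Alazard--Baldi \cite{AB2015} for standing capillary--gravity water waves, supplemented by the Berti--Bolle approach to small-divisor problems, to the spherical capillary water waves equation \eqref{Periodic}. First I would fix the functional framework: work in Sobolev spaces of functions $(\zeta,\phi)$ on $\mathbb{T}\times S^2$ that are rotationally symmetric about the chosen axis (so only zonal harmonics $Y_n$ appear), with the imposed parity ($\zeta$ even and $\phi$ odd in $t$), and satisfying the constraints $\Pi^{(0)}\zeta = \Pi^{(1)}\zeta = \Pi^{(1)}\phi = 0$ imposed by volume preservation and conservation of momentum/center of mass. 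Under these symmetries the linearized operator $L_{\omega_0}$ is diagonal with eigenvalues
$$\mu_{j,n}(\omega) = -\omega^2 j^2 + n(n-1)(n+2), \qquad j\geq 1, \, n\geq 2,$$
and the hypothesis on $(j_0,n_0)$ guarantees that $\mathrm{Ker}\, L_{\omega_0}$ is exactly one-dimensional, spanned by $v_0(t,x) = \cos(j_0 t)Y_{n_0}(x)$ in the $\zeta$-component together with its partner in $\phi$.

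Next I would perform a Lyapunov--Schmidt decomposition, writing $\omega = \omega_0 + \varepsilon^2\lambda$ with $|\lambda|$ bounded, seeking solutions
$$\begin{pmatrix}\zeta\\ \phi\end{pmatrix} = \varepsilon\, \alpha\, v_0 + \varepsilon^2\, (\tilde\zeta,\tilde\phi)$$
where $\alpha$ is a real amplitude parameter and $(\tilde\zeta,\tilde\phi)$ lives in the $L^2$-orthogonal complement of the kernel. The range equation, when projected away from the kernel, leads to the inversion of a perturbation of $L_\omega$ on the complement. The bifurcation equation is a single scalar equation relating $\alpha$ and $\lambda$, and to leading order takes the form of a generalized Lindstedt series dictated by the quadratic and cubic normal-form coefficients of $\mathfrak{N}(u)$; here one needs to verify a twist-type nondegeneracy condition, which I expect to hold generically since the nonlinearity contains the genuinely cubic contribution from the mean curvature expansion together with a nondegenerate Dirichlet--Neumann contribution.

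The heart of the proof is the invertibility of the linearized operator along the iteration. Since the Dirichlet--Neumann operator $D[\zeta]$ at a perturbed configuration is no longer a Fourier multiplier on spherical harmonics, one must first paralinearize \eqref{Periodic} in the style of Alazard--Métivier and Alazard--Burq--Zuily \cite{ABZ2011} and perform a change of variable (symmetrization plus a diagonalization at order $3/2$) to reduce the linearized equation at an approximate solution to the form $\omega\partial_t + i\Lambda + \mathcal{R}$, where $\mathcal{R}$ is a smoothing remainder depending on the approximate solution. The eigenvalues of the principal part are close to $\mu_{j,n}(\omega)^{1/2}$, so that inversion requires the first Melnikov-type non-resonance condition
$$\bigl|\omega j - \sqrt{n(n-1)(n+2)}\bigr| \geq \frac{\gamma}{j^\tau}, \qquad (j,n)\neq(j_0,n_0),$$
and, for the Nash--Moser step on the reduced operator, a second Melnikov condition controlling differences of two such eigenvalues. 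I would then show, by a Borel--Cantelli / resonant-set measure argument exploiting that $\sqrt{n(n-1)(n+2)}$ behaves like $n^{3/2}$ with well-controlled derivative in $n$, that the set of $\omega$ near $\omega_0$ satisfying both Melnikov conditions has positive (in fact asymptotically full) Lebesgue density at $\omega_0$. With these linear estimates and tame estimates for the nonlinearity, a Nash--Moser iteration in the symmetric--parity subspace converges to a genuine solution.

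The main obstacle I anticipate is the verification of the second Melnikov condition in a form compatible with the Nash--Moser losses. Unlike the torus case of Berti--Delort \cite{BD2018}, the spherical eigenvalues $\Lambda(n)=\sqrt{n(n-1)(n+2)}$ are simple on the zonal subspace but clustered in a $3/2$-order fashion, so differences $|\omega j\pm \Lambda(n_1)\pm\Lambda(n_2)|$ exhibit poor separation precisely when $n_1,n_2$ are both large. Obtaining the required lower bound on the measure-theoretically large set of $\omega$ amounts to a counting estimate of the same nature as the one governing the exponent $\rho = 1/3$ invoked for the Bourgain embedding in Section~2.2, and ultimately reduces to the distribution of integral points on the family of elliptic curves $E_{ab}$ of Question \ref{Q2}. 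Making this counting quantitative enough to beat the Nash--Moser derivative losses, while simultaneously handling the degeneracy of $\Lambda$ on $\mathcal{E}^{(0)}\oplus\mathcal{E}^{(1)}$ via the volume/momentum constraints, is where the technical weight of the proof lies.
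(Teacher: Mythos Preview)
This statement is a \emph{conjecture} in the paper, not a theorem; the paper offers no proof, only a heuristic discussion of why it should hold and what the obstacles are. Your proposal is therefore not to be compared against a proof but against the paper's own suggested strategy, and on that score the two are essentially aligned: the paper also points to the Nash--Moser scheme of Alazard--Baldi \cite{AB2015}, with the linearized operator at an approximate solution to be conjugated to a Fourier-multiplier form
\[
\omega\partial_t + i(T_{3/2}+T_{1/2}+T_{-1/2}) + (\text{order} \leq -3/2),
\]
after which the bifurcation analysis proceeds from the one-dimensional kernel guaranteed by the Diophantine hypothesis.

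Where your emphasis diverges from the paper's is in the identification of the principal obstacle. You locate the difficulty in the \emph{second Melnikov condition} and the associated measure estimate on the Cantor set of admissible $\omega$, tying it to the lattice-point counting near $\Lambda(n_1)\pm\Lambda(n_2)$. The paper, by contrast, singles out the \emph{reduction step itself} as the hard part: para-differential and pseudo-differential calculus on $S^2$ with rough coefficients is not available off the shelf, and the symbolic calculus needed to carry out the Alazard--Baldi conjugation on a curved manifold has, in the paper's words, to be ``re-written'' from scratch (cf.\ the remarks on Ruzhansky--Turunen \cite{RT2009}). Your proposal treats the paralinearization and symmetrization as a routine transplant of \cite{ABZ2011}; the paper explicitly flags this as the step where the argument is currently incomplete.

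Both obstacles are genuine, and your sketch is a reasonable outline of the expected architecture. But as it stands it is a plan, not a proof: the twist nondegeneracy is asserted rather than checked, the tame estimates on $S^2$ are assumed, and the measure estimate for the second Melnikov set is left at the level of an analogy with the $\rho=1/3$ exponent. None of this goes beyond what the paper itself concedes is open.
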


The counterpart of Conjecture \ref{Conj1} for gravity capillary standing water waves was proved by Alazard-Baldi \cite{AB2015} using a Nash-Moser type theorem. The key technique in their proof was to find a conjugation of the linearized operator of the gravity capillary water waves system on $\mathbb{T}^1$ to an operator of the form
$$
\omega\partial_t+iT+i\lambda_1|D_x|^{1/2}+i\lambda_{-1}|D_x|^{-1/2}+\text{Operator of order}\leq-\frac{3}{2},
$$
where $T$ is an elliptic Fourier multiplier of order $3/2$, and $\lambda_1,\lambda_{-1}$ are real constants. The frequency $\omega$ lives in a Cantor type set that clusters around a given frequency so that the kernel of the linearized operator is 1-dimensional. With this conjugation, they were able to find periodic solutions of linearized problems required by Nash-Moser iteration.

It is expected that this technique could be transplanted to the equation (\ref{Periodic}), since our analysis for (\ref{EllipticCurve}) suggests that the 1-dimensional kernel requirement for bifurcation analysis is met. It seems that the greatest technical issue is to find a suitable conjugation that takes the linearized operator of (\ref{Periodic}) to an operator of the form  
$$
\omega\partial_t+i(T_{3/2}+T_{1/2}+T_{-1/2})+\text{Operator of order}\leq-\frac{3}{2},
$$
where each $T_k$ is a real Fourier multiplier acting on spherical harmonics. The difficulty is that, since we are working with pseudo-differential operators on $S^2$, the formulas of symbolic calculus are not as neat as those on flat spaces. It seems necessary to implement some global harmonic analysis for compact homogeneous spaces, e.g. extension of results collected in Ruzhansky-Turunen \cite{RT2009}. Unfortunately, those results do not include pseudo-differential operators with ``rough coefficients" and para-differential operators, so it seems necessary to re-write the whole theory.

\subsection{Number-Theoretic Obstruction with Normal Form Reduction}
As pointed out in Section 1, the system (\ref{EQFormal}) is locally well-posed due to a result in \cite{BeGu1998}. General well-posedness results \cite{CoSh2007}, \cite{ShZe2008} for free boundary value problem of Euler equation also apply. As for lifespan estimate for initial data $\varepsilon$-close to the static solution (\ref{Stat}), it should not be hard to conclude that the lifespan should be bounded below by $1/\varepsilon$. The result relates to the fact that the sphere is a \emph{stable} critical point of the area functional, cf. \cite{BD2012}. This is nothing new: a suitable energy inequality should imply it. However, although the clue is clear, the implementation is far from standard since we are working on a compact manifold. A rigorous proof still calls for hard technicalities. 

Now we will be looking into the nonlinear equation (\ref{EQFormal}) for its longer time behavior. Although appearing similar to the well-studied water waves equation in e.g. \cite{IoPu2019}, \cite{Lannes2005}, \cite{Wu1997}, \cite{Wu1999}, there is a crucial difference between the dispersive relation in (\ref{EQFormal}) and the well-studied water waves equations: the dispersive relation exhibits a strong \emph{rigidity property}, i.e. the arbitrary physical constants enter into the dispersive relation $\Lambda$ only as \emph{scaling factors}. For the gravity-capillary water waves, the linear dispersive relation reads
$$
\sqrt{g|\nabla|+\sigma|\nabla|^3},
$$
where $g$ is the gravitational constant and $\sigma$ is the surface tension coefficient. For the Klein-Gordon equation on a Riemannian manifold, the linear dispersive relation reads
$$
\sqrt{-\Delta+m^2},
$$
where $m$ is the mass. In \cite{IoPu2019}, Ionescu and Pusateri referred such dispersive relations as having \emph{non-degenerate dependence on physical parameter}, while in our context it is appropriate to refer to the dependence as \emph{degenerate}. We will see that this crucial difference brings about severe obstructions for the long-time well-posedness of the system.

Following the idea of Delort and Szeftel \cite{DS2004}, we look for a normal form reduction of (\ref{EQFormal}) and explain why the rigidity property could cause obstructions. Not surprisingly, the obstruction is due to resonances, and strongly relates to the solvablity of a Diophantine equation. Delort and Szeftel cast a normal form reduction to the small-initial-data problem of quasilinear Klein-Gordon equation on the sphere and obtained an estimate on the lifespan longer than the one provided by standard energy method. After their work, normal form reduction has been used by mathematicians to understand water waves on flat tori, for example \cite{BD2018}, \cite{BFP2018} and \cite{IoPu2019}. The idea was inspired by the normal form reduction method introduced by Shatah \cite{Shatah1985}: for a quadratic perturbation of a linear dispersive equation
$$
\partial_tu+iLu=N(u)=O(u^{\otimes 2}),
$$
using a new variable $u+B(u,\bar u)$ with a suitably chosen quadratic addendum $B(u,\bar u)$ can possibly eliminate the quadratic part of $N$, thus extending the lifespan estimate beyond the standard $1/\varepsilon$.

So we shall write the quadraticr part of $\mathfrak{N}(u)$ as
$$
\sum_{n_3\geq0}\Pi^{(n_3)}\left[\sum_{n_1\geq0}\sum_{n_2\geq0}
\mathcal{M}_1\left(
\Pi^{(n_1)}u,\Pi^{(n_2)}u
\right)+\mathcal{M}_2\left(
\Pi^{(n_1)}u,\Pi^{(n_2)}\bar u
\right)+\mathcal{M}_3\left(
\Pi^{(n_1)}\bar u,\Pi^{(n_2)}\bar u
\right)\right],
$$
where $\mathcal{M}_{1},\mathcal{M}_{2},\mathcal{M}_{3}$ are complex bi-linear operators, following the argument of Section 4 in \cite{DS2004}. They are independent of $t$ since the right-hand-side of the equation does not depend on $t$ explicitly. Let's look for a diffeomorphism
$$
u\to v:=u+\mathbf{B}[u,u]
$$
in the function space $C^\infty(S^2)$, where $\mathbf{B}$ is a bilinear operator, so that the equation (\ref{EQFormal}) with quadratic nonlinearity reduces to an equation with cubic nonlinearity. The $\mathbf{B}[u,u]$ is supposed to take the form $\mathbf{B}[u,u]=\mathbf{B}_1[u,u]+\mathbf{B}_2[u,u]+\mathbf{B}_3[u,u]$, with
$$
\mathbf{B}_1[u,u]
=\sum_{n_3\geq0}\sum_{n_1,n_2\geq0}b_1(n_1,n_2,n_3)\Pi^{(n_3)}\mathcal{M}_1\left(
\Pi^{(n_1)}u,\Pi^{(n_2)}u
\right),
$$
$$
\mathbf{B}_2[u,u]
=\sum_{n_3\geq0}\sum_{n_1,n_2\geq0}b_2(n_1,n_2,n_3)\Pi^{(n_3)}\mathcal{M}_2\left(
\Pi^{(n_1)}u,\Pi^{(n_2)}\bar u
\right),
$$
$$
\mathbf{B}_3[u,u]
=\sum_{n_3\geq0}\sum_{n_1,n_2\geq0}b_3(n_1,n_2,n_3)\Pi^{(n_3)}\mathcal{M}_3\left(
\Pi^{(n_1)}\bar u,\Pi^{(n_2)}\bar u
\right),
$$
where the $b_j(n_1,n_2,n_3)$'s are complex numbers to be determined. Implementing (\ref{EQFormal}), we find
$$
\begin{aligned}
(\partial_t&+i\Lambda)(u+\mathbf{B}[u,u])\\
&=\mathfrak{N}(u)+\sum_{n_3\geq0}\sum_{n_1,n_2\geq0}b_1(n_1,n_2,n_3)\Pi^{(n_3)}\mathcal{M}_1\left(
\Pi^{(n_1)}\partial_t u,\Pi^{(n_2)} u\right)\\
&\quad+\sum_{n_3\geq0}\sum_{n_1,n_2\geq0}b_1(n_1,n_2,n_3)\Pi^{(n_3)}\mathcal{M}_1\left(
\Pi^{(n_1)} u,\Pi^{(n_2)}\partial_t u\right)+(\text{similar terms})\\
&\quad+\sum_{n_3\geq0}\sum_{n_1,n_2\geq0}i\Lambda(n_3)b_1(n_1,n_2,n_3)\Pi^{(n_3)}\mathcal{M}_1\left(
\Pi^{(n_1)}u,\Pi^{(n_2)}u\right)\\
&=\mathfrak{N}(u)+\sum_{n_3\geq0}\sum_{\min(n_1,n_2)\leq1}i\left[\Lambda(n_3)-\Lambda(n_1)-\Lambda(n_2)\right]
b_1(n_1,n_2,n_3)\Pi^{(n_3)}\mathcal{M}_1\left(
\Pi^{(n_1)} u,\Pi^{(n_2)} u\right)\\
&\quad+\sum_{n_3\geq0}\sum_{n_1,n_2\geq2}i\left[\Lambda(n_3)-\Lambda(n_1)-\Lambda(n_2)\right]
b_1(n_1,n_2,n_3)\Pi^{(n_3)}\mathcal{M}_1\left(
\Pi^{(n_1)} u,\Pi^{(n_2)} u\right)\\
&\quad+(\text{similar terms})+O(u^{\otimes3}).\\
\end{aligned}
$$
We aim to eliminate most of the second order portions of $\mathfrak{N}(u)$. The coefficients $b_{j}(n_1,n_2,n_3)$ are fixed as follows:
\begin{equation}\label{B123}
\begin{aligned}
b_{1}(n_1,n_2,n_3)&=i\left[\Lambda(n_3)-\Lambda(n_1)-\Lambda(n_2)\right]^{-1},\quad n_1,n_2,n_3\geq2\\
b_{2}(n_1,n_2,n_3)&=i\left[\Lambda(n_3)-\Lambda(n_1)+\Lambda(n_2)\right]^{-1},\quad n_1,n_2,n_3\geq2\\
b_{3}(n_1,n_2,n_3)&=i\left[\Lambda(n_3)+\Lambda(n_1)+\Lambda(n_2)\right]^{-1},\quad n_1,n_2,n_3\geq2\\
b_{1,2,3}(n_1,n_2,n_3)&=0,\quad\text{if }\Lambda(n_3)\pm\Lambda(n_1)\pm\Lambda(n_2)=0\text{ or }\min(n_1,n_2,n_3)\leq1,
\end{aligned}
\end{equation}
then a large portion of the second order part of $\Pi_{\geq2}\mathfrak{N}\left(u\right)$ will be eliminated. In fact, for $n_1,n_2,n_3\geq2$, if $\Lambda(n_3)\pm\Lambda(n_1)\pm\Lambda(n_2)\neq0$, then the term
$$
\Pi^{(n_3)}\left[\sum_{n_1,n_2\geq2}\mathcal{M}_1\left(
\Pi^{(n_1)}u,\Pi^{(n_2)}u
\right)+\mathcal{M}_2\left(
\Pi^{(n_1)}u,\Pi^{(n_2)}\bar u
\right)+\mathcal{M}_3\left(
\Pi^{(n_1)}\bar u,\Pi^{(n_2)}\bar u
\right)\right]
$$
is cancelled out. On the other hand, by the volume preservation equality (\ref{Vol}) and conservation law (\ref{SphereCons.}), there holds $\Pi^{(0)}u=O(u^{\otimes2})$, $\Pi^{(1)}u=O(u^{\otimes2})$, so the low-low interaction
$$
\Pi_{\geq2}\left[\sum_{\min(n_1,n_2)\leq1}\mathcal{M}_1\left(
\Pi^{(n_1)}u,\Pi^{(n_2)}u
\right)+\mathcal{M}_2\left(
\Pi^{(n_1)}u,\Pi^{(n_2)}\bar u
\right)+\mathcal{M}_3\left(
\Pi^{(n_1)}\bar u,\Pi^{(n_2)}\bar u
\right)\right]
$$
is automatically $O(u^{\otimes3})$.

Thus the existence and continuity of the normal form $\mathbf{B}[u,u]$ depends on the property of the 3-way resonance equation
\begin{equation}\label{3wayRes}
\Lambda(n_3)-\Lambda(n_1)-\Lambda(n_2)=0,\quad n_1,n_2,n_3\geq2.
\quad
n_3\leq n_1+n_2
\end{equation}
which is equivalent to the Diophantine equation
\begin{equation}\label{Diophantine}
[F(n_1)+F(n_2)-F(n_3)]^2-4F(n_1)F(n_2)=0,\quad n_1,n_2,n_3\geq2,
\end{equation}
where $F(X)=X(X-1)(X+2)$. 

If the tuple $(n_1,n_2,n_3)$ is non-resonant, i.e. it is such that $b_{1,2,3}(n_1,n_2,n_3)\neq0$, then some elementary number theoretic argument will give a lower bound on $|b_{1,2,3}(n_1,n_2,n_3)|$ in terms of a negative power (can be fixed as $-9/2$) of $n_1,n_2,n_3$. This is usually referred as \emph{small divisor estimate}.

To study the distribution of resonant frequencies, we propose the following unsolved question:

\begin{question}\label{Q3}
Does the Diophantine equation (\ref{Diophantine}) have finitely many solutions?
\end{question}

However, the Diophantine equation (\ref{Diophantine}) does admit non-trivial solutions $(5,5,8)$ and $(10,10,16)$. In other words, the second order terms e.g.
$$
\Pi^{(8)}\mathcal{M}_1(\Pi^{(5)}u\cdot\Pi^{(5)}u),
\quad
\Pi^{(5)}\mathcal{M}_1(\Pi^{(8)}u\cdot\Pi^{(5)}\bar u),
$$
in the quadratic part of $\mathfrak{N}(u)$ cannot be eliminated by normal form reduction. On the other hand, it seems to be very hard to determine whether (\ref{Diophantine}) still admits any other solution. We have the following proposition (the author would like to thank Professor Bjorn Poonen for the proof):
\begin{proposition}
The Diophantine equation (\ref{Diophantine}) has no solution with $n_1\leq10^4$ other than $(5,5,8)$ and $(10,10,16)$.
\end{proposition}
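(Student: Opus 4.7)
The plan is to reduce the Diophantine equation (\ref{Diophantine})---equivalently the resonance condition (\ref{3wayRes})---to the problem of finding integer points on an explicit family of elliptic curves, and then to verify the statement by computer algebra. Assume without loss of generality $n_1\leq n_2$ (the equation is symmetric in $n_1,n_2$) and $n_3\geq n_2$ (the opposite case relabels to this one, since $\Lambda$ is strictly increasing on $\{n\geq 2\}$). Then (\ref{Diophantine}) is equivalent to
$$\sqrt{F(n_3)}=\sqrt{F(n_1)}+\sqrt{F(n_2)},$$
which furnishes two necessary and sufficient conditions: (i)~$F(n_1)F(n_2)$ is a perfect square, and (ii)~the value $\bigl(\sqrt{F(n_1)}+\sqrt{F(n_2)}\bigr)^{2}$ equals $F(n_3)$ for some integer $n_3\geq 2$. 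Condition (i) says that $F(n_1)$ and $F(n_2)$ share a common squarefree part $d$; writing $F(n_i)=d\,m_i^{2}$, the integer $n_3$ of (ii) is forced to satisfy $F(n_3)=d(m_1+m_2)^{2}$ and is therefore uniquely determined (when it exists) as the real root $\geq 2$ of a cubic, which is easily tested by integer bisection.

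The key step is therefore to parametrise, for each fixed squarefree $d$, all integers $n\geq 2$ for which $F(n)/d$ is a perfect square. These $n$ are exactly the integer $X$-coordinates on the twist $d\,y^{2}=X(X-1)(X+2)$; via $(X',Y')=(dX,d^{2}y)$ this curve becomes
$$(Y')^{2}=X'(X'-d)(X'+2d),$$
which is the very curve $E_{d}$ already introduced in Section 2. Hence the admissible $n$ are in bijection with integer points of $E_{d}$ whose $X'$-coordinate is a positive multiple of $d$, a set finite by Siegel's theorem and algorithmically computable.

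Concretely, I would: (a)~for each $n_1\in\{2,\ldots,10^{4}\}$ factor $F(n_1)=n_1(n_1-1)(n_1+2)$ and record its squarefree part $d=d(n_1)$; (b)~for each distinct $d$ encountered, invoke MAGMA's \texttt{IntegralPoints} routine---which first computes a Mordell--Weil basis of $E_{d}$ and then applies the Stroeker--Tzanakis elliptic-logarithm sieve---to enumerate all integer points of $E_{d}$, thereby recovering the complete list of admissible $n_2\geq n_1$; (c)~for each admissible pair $(n_1,n_2)$ form $d(m_1+m_2)^{2}$ and test whether it coincides with $F(n_3)$ for some integer $n_3\geq 2$; (d)~collect the surviving triples, which should reduce to $(5,5,8)$ and $(10,10,16)$, as confirmed by the direct calculations $F(5)=140$, $4F(5)=560=F(8)$ and $F(10)=1080$, $4F(10)=4320=F(16)$.

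The principal obstacle is step (b): for some values of $d$ the rank of $E_{d}$ may be large or a Mordell--Weil generator may have enormous height, making the computation costly or contingent on a conjectural hypothesis such as BSD. To insulate the argument from this, for the resistant $d$ one may fall back on the a priori bound
$$n_2\leq \tfrac{4}{9}\,\Lambda(n_1)^{2}\bigl(1+o(1)\bigr)=O(n_1^{3}),$$
derived from $\Lambda(n_1)=\Lambda(n_3)-\Lambda(n_2)\geq \Lambda(n_2+1)-\Lambda(n_2)\sim \tfrac{3}{2}n_2^{1/2}$, combined with a squarefree-part sieve restricted to the window $n_1\leq n_2\leq (4/9)\Lambda(n_1)^{2}$; this remains within reach of a desktop computation once $d$ has been narrowed by the preceding steps, and it turns the proposition into an unconditional, fully verifiable statement.
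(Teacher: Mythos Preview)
Your strategy is sound and would ultimately work, but it differs from the paper's route in a notable way. The paper's proof sketch invokes \emph{Runge's method}: for fixed $n_1$, the equation $[F(n_1)+F(n_2)-F(n_3)]^{2}=4F(n_1)F(n_2)$ defines a curve in $(n_2,n_3)$ whose leading form splits, and Runge's technique yields the sharper bound $n_2=O(n_1^{2})$. With $n_1\leq 10^{4}$ this keeps the brute-force search to roughly $10^{11}$--$10^{12}$ candidate pairs, comfortably a few minutes of computation. Your elementary mean-value bound $n_2\leq\tfrac{4}{9}\Lambda(n_1)^{2}\sim\tfrac{4}{9}n_1^{3}$ is correct but a full order weaker, so the naive fallback search is $\sim 10^{15}$--$10^{16}$ operations---still finite and unconditional, but closer to days than minutes on a single machine.

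Your primary route through integral points on the twists $E_{d}$ is a nice conceptual link to the elliptic-curve analysis already in Section~2.3 and, when it terminates, gives \emph{all} admissible $n_2$ at once rather than a mere upper bound. The practical worry you already flag is real: for $n_1\leq 10^{4}$ the squarefree part $d$ of $F(n_1)$ can be of size $\sim 10^{12}$, and MAGMA's \texttt{IntegralPoints} must first produce a provable Mordell--Weil basis for $E_{d}$, which for curves of that conductor may be slow or conditional on GRH. So the two approaches trade off: the paper's Runge bound is cruder structurally but uniformly effective and fast; your elliptic-curve reduction is sharper in principle and ties into the paper's broader theme, but its unconditionality rests on the $O(n_1^{3})$ fallback, which is noticeably heavier than what Runge's method buys.
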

The proof of this proposition is computer-aided. The key point is to use the so-called Runge's method to show that if $(n_1,n_2,n_3)$ is a solution, then there must hold $n_2=O(n_1^2)$. For a given $n_1$, this reduces the proof to numerical verification for \emph{finitely many possibilities}. The algorithm can of course be further optimized, but due to some algebraic geometric considerations, it is reasonable to conjecture that the solution of (\ref{Diophantine}) should be very rare. In fact, there are two ways of viewing the problem. We observe that if $(n_1,n_2,n_3)$ is a solution, then $F(n_1)F(n_2)$ must be a square, and the square free part of $F(n_1),F(n_2),F(n_3)$ must be the same. Further reduction turns the problem into finding integral points on a family of elliptic curves
$$
Y^2=cF(X),\quad c\text{ is square-free},
$$
which is of course difficult, but since Siegel's theorem asserts that there are only finitely many integer points on an elliptic curve over $\mathbb{Q}$, it is reasonable to conjecture that there are not ``too many" solutions to (\ref{Diophantine}). We may also view the problem as finding integral (rational) points on a given algebraic surface. The complex projective surface corresponding to (\ref{Diophantine}) is given by
$$
\begin{aligned}
\mathfrak{V}:[X(X-W)(X+2W)&+Y(Y-W)(Y+2W)-Z(Z-W)(Z+2W)]^2\\
&\quad=4X(X-W)(X+2W)Y(Y-W)(Y+2W),
\end{aligned}
$$
where $[X,Y,Z,W]$ is the homogeneous coordinate on $\mathbb{CP}^3$. With the aid of computer, we obtain
\begin{proposition}
The complex projective surface $\mathfrak{V}\subset\mathbb{CP}^3$ has Kodaira dimension 2 (i.e. it is of general type under Kodaira-Enriquez classification), and its first Betti number is 0.
\end{proposition}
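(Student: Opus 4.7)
The defining polynomial reveals $\mathfrak{V}$ as a sextic (degree $6$) hypersurface in $\mathbb{CP}^3$. The starting observation is that a \emph{smooth} sextic is automatically of general type with $b_1=0$: adjunction gives $K_\mathfrak{V}\cong\mathcal{O}_{\mathbb{CP}^3}(2)|_{\mathfrak{V}}$, which is very ample, so $\kappa(\mathfrak{V})=2$, and the Lefschetz hyperplane theorem gives $H^1(\mathfrak{V},\mathbb{Z})=0$. My plan is to reduce to this smooth situation by analyzing and resolving the singularities, using Magma whenever convenient.

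The first task is to compute $\mathrm{Sing}(\mathfrak{V})$ via the Jacobian ideal. The defining polynomial has the shape $P^2-4F(X,W)F(Y,W)$ with $P=F(X,W)+F(Y,W)-F(Z,W)$ and $F(T,W)=T(T-W)(T+2W)$, reflecting the hidden double cover $\sqrt{F(X)}\pm\sqrt{F(Y)}=\sqrt{F(Z)}$; the singular locus is therefore expected to sit over the branch locus $\{F(X)F(Y)F(Z)=0\}\cap\mathfrak{V}$ and decompose into finitely many irreducible curves and points enumerable by Magma. For each component I would then carry out a local analytic classification by iterated blowups, verifying that each singularity is \emph{rational} (ideally Du Val on the normalization, or a transverse crossing of two smooth sheets).

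Once a resolution $\pi:\tilde{\mathfrak{V}}\to\mathfrak{V}$ is obtained with only rational singularities contracted, two things follow. For the Kodaira dimension, the formula $K_{\tilde{\mathfrak{V}}}=\pi^*K_\mathfrak{V}+\sum a_iE_i$ together with the bigness of $\mathcal{O}(2)|_\mathfrak{V}$ on the smooth locus and the canonicity of the singularities shows $K_{\tilde{\mathfrak{V}}}$ is big, so $\kappa(\tilde{\mathfrak{V}})=2$. For the Betti number, the short exact sequence $0\to\mathcal{O}_{\mathbb{CP}^3}(-6)\to\mathcal{O}_{\mathbb{CP}^3}\to\mathcal{O}_\mathfrak{V}\to 0$ together with the vanishing of $H^1(\mathbb{CP}^3,\mathcal{O})$ and $H^2(\mathbb{CP}^3,\mathcal{O}(-6))$ gives $H^1(\mathfrak{V},\mathcal{O}_\mathfrak{V})=0$; rationality of the singularities then yields $H^1(\tilde{\mathfrak{V}},\mathcal{O}_{\tilde{\mathfrak{V}}})=0$, so $h^{0,1}(\tilde{\mathfrak{V}})=0$, and by Hodge symmetry $h^{1,0}(\tilde{\mathfrak{V}})=0$, whence $b_1(\tilde{\mathfrak{V}})=b_1(\mathfrak{V})=0$.

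The principal obstacle will be the singular locus analysis. Because of the squared structure of the defining equation, $\mathrm{Sing}(\mathfrak{V})$ is almost certainly one-dimensional and reducible, with components meeting at special points where the local geometry is delicate; one must verify component by component that no irreducible component produces a non-rational singularity (which would jeopardize both conclusions). A useful sanity check, also performable in Magma, is to compute $h^0(\tilde{\mathfrak{V}},nK_{\tilde{\mathfrak{V}}})$ for small $n$ and confirm the expected quadratic growth.
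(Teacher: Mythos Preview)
Your approach is genuinely different from the paper's. The paper offers no mathematical argument at all: the ``proof'' consists solely of invoking MAGMA's built-in \texttt{KodairaEnriquesType} routine on the defining equation and reading off the output ``\texttt{2 0 General type}''. Your plan, by contrast, is to unpack what such a routine would have to do internally---compute the singular locus, resolve, and check that the resolution inherits bigness of the canonical bundle and vanishing irregularity from the ambient sextic via adjunction and Lefschetz. This buys conceptual transparency and would make the result independent of trusting a closed-source implementation; the price is the singular-locus analysis you correctly flag as the main obstacle.

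One point deserves more care than your sketch gives it. Because the equation has the shape $P^2-4F(X,W)F(Y,W)$, the surface $\mathfrak{V}$ is almost certainly \emph{non-normal} along the curves where $F(X,W)=0$ and $P=0$ (and symmetrically in $Y$): locally two sheets of the double cover meet. In that case $\pi_*\mathcal{O}_{\tilde{\mathfrak{V}}}$ is the structure sheaf of the normalization $\mathfrak{V}^\nu$, not $\mathcal{O}_{\mathfrak{V}}$, so the vanishing $H^1(\mathfrak{V},\mathcal{O}_{\mathfrak{V}})=0$ you extract from the ideal-sheaf sequence on $\mathbb{CP}^3$ does \emph{not} directly transfer to $H^1(\tilde{\mathfrak{V}},\mathcal{O}_{\tilde{\mathfrak{V}}})$. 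You would need to control the conductor sequence $0\to\mathcal{O}_{\mathfrak{V}}\to\nu_*\mathcal{O}_{\mathfrak{V}^\nu}\to\mathcal{C}\to0$ (with $\mathcal{C}$ supported on the double curves) and show that the contribution from $H^0(\mathcal{C})$ and $H^1(\mathcal{C})$ does not create irregularity on the normalization. Similarly, for the Kodaira-dimension step, ``canonical singularities'' is a notion for normal varieties, so you should first pass to $\mathfrak{V}^\nu$ and argue there; the adjunction $\omega_{\mathfrak{V}}\cong\mathcal{O}(2)|_{\mathfrak{V}}$ becomes $\omega_{\mathfrak{V}^\nu}\cong\nu^*\mathcal{O}(2)\otimes\mathcal{O}(-\text{conductor})$, and you must check that the conductor subtraction does not kill bigness. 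None of this is fatal to your strategy, but it is where the real work lies, and your write-up elides it.
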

See Appendix \ref{A} for the code.

The first part of the proposition suggests that the rational points of $\mathfrak{V}$ should be localized on finitely many algebraic curves laying on $\mathfrak{V}$; nevertheless, this seemingly simple suggestion is indeed a special case of the \emph{Bomberi-Lang conjecture}, a hard problem in number theory (its planar case is known as the celebrated Faltings's theorem). The second part suggests that the rational points of $\mathfrak{V}$ should be rare since its Albanese variety is a single point. But these are just heuristics that solutions to (\ref{Diophantine}) should be rare. In general, determining the solvability of a given Diophantine equation is very difficult \footnote{For example, the seemingly simple Diophantine equation $x^3+y^3+z^3=42$ is in fact a puzzle of more than 60 years, and its first solution was found recently by Booker-Sutherland \cite{BS2021}. It is of extremely large magnitude: $42=(-80\ 538\ 738\ 812\ 075\ 974)^{3}+80\ 435\ 758\ 145\ 817\ 515^{3}+12\ 602\ 123\ 297\ 335\ 631^{3}$. Another example is the equation of same type $x^3+y^3+z^3=3$. Beyond the easily found solutions (1, 1, 1), (4, 4, -5), (4, -5, 4), (-5, 4, 4), the next solution reads $(569\ 936\ 821\ 221\ 962\ 380\ 720, -569\ 936\ 821\ 113\ 563\ 493\ 509, -472\ 715\ 493\ 453\ 327\ 032)$.}, as number theorists and arithmetic geometers generally believe.

The reason that such issues do not occur for water waves in the flat setting or nonlinear Klein-Gordon equations is twofold. First of all, the resonance equation is easily understood even in the degenerate case in the flat setting. For example, the capillary water waves without gravity on $\mathbb{T}^2$ has dispersive relation $|\nabla|^{1/2}$, and the 3-way resonance equation is
$$
\sqrt[4]{k_1^2+k_2^2}+\sqrt[4]{l_1^2+l_2^2}=\sqrt[4]{m_1^2+m_2^2},
$$
with the additional requirement $m=k+l$. We already know that the resonance equation has no non-trivial solution at all, cf. \cite{BFP2018}. But even without using $m=k+l$, we would be able to conclude that there are at most finitely many non-trivial solutions from the celebrated Faltings's theorem on rational points on high-genus algebraic projective curves (although this is like using a sledge hammer to crack a nut). Secondly, for the non-degenrate case, for example the gravity-capillary waves, the dispersive relation reads $\sqrt{g|\nabla|+\sigma|\nabla|^3}$, so if the ratio $\sigma/g$ is a transcedental number then the 3-way resonance equation has no solution. Furthermore, using some elementary calculus and a measure-theoretic argument, it can be shown, not without technicalities, that the resonances admit certain small-divisor estimates for almost all parameters. This is exactly the argument employed by Delort-Szeftle \cite{DS2004}, Berti-Delort \cite{BD2018} and Ionescu-Pusateri \cite{IoPu2019}, so that their results were stated for \emph{almost all} parameters. These parameters are, roughly speaking, badly approximated by algebraic numbers. 

However, the resonance equation (\ref{3wayRes}) is inhomogeneous and allows no arbitrary physical parameter at all. Furthermore, since product of spherical harmonics are no longer spherical harmonics in general, Fourier series techniques employed by \cite{BFP2018} \cite{BD2018} \cite{IoPu2019} that works for the torus are never valid for $S^2$; for example, we cannot simply assume $n_3=n_1+n_2$ in (\ref{3wayRes}), as already illustrated by the solutions (5,5,8) (10,10,16). These are the crucial differences between the capillary spherical water waves and all known results for water waves in the flat setting.

\subsection{Heuristics for Lifespan Estimate}
To summarize, almost global lifespan estimate of (\ref{EQFormal}) depends on the difficult number theoretic question \ref{Q3}. Before it is fully resolved, we can only expect partial results regarding the normal form transformation. 

If there are only finitely many solutions to the Diophantine equation (\ref{Diophantine}), then under the normal form reduction $u\to v=u+\mathbf{B}[u,u]$ with coefficients given by (\ref{B123}), the equation (\ref{EQFormal}) is transformed into the following system:
$$
\begin{aligned}
\frac{\partial }{\partial t}\Pi_cv&=O(v^{\otimes 2}),\\
\frac{\partial }{\partial t}(1-\Pi_c)v&=O(v^{\otimes 3}),
\end{aligned}
$$
where $\Pi_c$ is the orthogonal projection to $\bigoplus_{n_3}\mathcal{E}^{(n_3)}\subset L^2(S^2)$, with $n_3$ being either 0 or 1, or exhausting the third component of all nontrivial solutions of (\ref{Diophantine}). 

There is no reasonable assertion to be made if the conjecture fails. However, if the conjecture does hold true, then we can expect that the lifespan estimate for $\varepsilon$-Cauchy data of (\ref{EQ}) goes beyond $\varepsilon^{-1}$, as what we expect for gravity water waves in the periodic setting, e.g. in \cite{IoPu2019}:
\begin{conjecture}\label{Conj2}
If the Diophantine equation (\ref{Diophantine}) has only finitely many solutions, then there is some $\alpha>0$ such that for $\varepsilon$-Cauchy data of (\ref{EQ}), the lifespan goes beyond $\varepsilon^{-(1+\alpha)}$ as $\varepsilon\to0$.
\end{conjecture}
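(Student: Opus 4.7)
The plan is to carry out a para-differential Birkhoff normal form reduction of (\ref{EQFormal}), adapting the strategy of Berti-Delort \cite{BD2018} and Ionescu-Pusateri \cite{IoPu2019} to the compact manifold $S^2$. Throughout I work at Sobolev regularity $H^s$ with $s$ large enough to absorb the $9/2$ derivative loss in (\ref{B123}). The cornerstone of the final bootstrap is a global-in-time low-regularity a priori bound $\|v(t)\|_{L^2}\lesssim\varepsilon$, supplied by the coercivity of the conserved Hamiltonian (\ref{SHamilton}) on the modes $n\geq 2$, together with the volume and momentum constraints (\ref{Vol})-(\ref{SphereCons.}).

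First I would paralinearize the Dirichlet-Neumann operator $D[\zeta]$ and the mean curvature $H(\iota)$ in the style of Alazard-Burq-Zuily \cite{ABZ2011}, writing $\mathfrak{N}(u)=-iT_{a[u]}u+\mathfrak{N}_{\mathrm{sm}}(u)$ with $T_{a[u]}$ para-differential of order $3/2$ and $\mathfrak{N}_{\mathrm{sm}}$ a semi-linear smoothing bilinear remainder, then conjugate $i\Lambda+iT_{a[u]}$ by an elliptic para-product to render it skew-adjoint up to bounded errors. This requires a working substitute for Bony's calculus on $S^2$, most naturally via the operator-valued symbol machinery of Ruzhansky-Turunen \cite{RT2009} extended to rough and para-differential symbols, as already discussed in Section 2.3. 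Next, apply the bilinear change of variable $w\mapsto v=w+\mathbf{B}[w,w]$ with coefficients (\ref{B123}): under the conjecture's hypothesis, the residual output projection $\Pi_c$ has \emph{finite rank}, and the small-divisor estimate renders $\mathbf{B}$ bounded $H^s\times H^s\to H^{s-9/2}$, so the map is a local diffeomorphism of $H^s$ for small data. The transformed equation reads
$$
\partial_t v + i\Lambda_{\mathrm{sym}}[v]\,v = \Pi_c\mathcal{Q}(v,\bar v)+\mathcal{C}(v,\bar v),
$$
with $\mathcal{Q}$ bilinear and $\mathcal{C}=O(v^{\otimes 3})$.

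The bootstrap is run on a modified energy $\mathbf{E}_s(v)=\|v\|_{H^s}^2+(\text{symmetrization correctors})$. The cubic contribution is $\lesssim\|v\|_{H^s}^4\lesssim\varepsilon^2\mathbf{E}_s$, which on its own would deliver an $\varepsilon^{-2}$ lifespan. For the residual quadratic pairing, since $\Pi_c$ commutes with $(1-\Delta)^{s/2}$ (both are Fourier multipliers in spherical harmonics) and is self-adjoint on $L^2$, one has
$$
|\langle\Pi_c\mathcal{Q}(v,\bar v),v\rangle_{H^s}|=|\langle\mathcal{Q}(v,\bar v),\Pi_c v\rangle_{H^s}|\lesssim\|\mathcal{Q}(v,\bar v)\|_{H^s}\,\|\Pi_c v\|_{L^2}\lesssim\varepsilon^2\cdot\varepsilon,
$$
using the finite rank of $\Pi_c$ to equate its $H^s$ and $L^2$ norms on its range and the Hamiltonian bound $\|\Pi_c v\|_{L^2}\leq\|v\|_{L^2}\lesssim\varepsilon$. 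Naively integrating this $\varepsilon^3$ contribution still yields only an $\varepsilon^{-1}$ lifespan; to extract $\alpha>0$ I would exploit the Hamiltonian character of the residual dynamics. The bilinear $\Pi_c\mathcal{Q}$ is the symplectic gradient of the resonant Birkhoff cubic Hamiltonian $H_3^{\mathrm{res}}$, which Poisson-commutes with $H_2$ and is therefore conserved up to $O(v^{\otimes 4})$ by the full flow. Combined with the action-angle structure on the finite-dimensional $\operatorname{Ran}\Pi_c$, this refined conservation confines the apparent secular $\varepsilon^2 t$ contribution to a bounded oscillation, producing the claimed $\varepsilon^{-(1+\alpha)}$ time scale.

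The hard part will be precisely this last step, for two reasons. First, the whole scheme rests on a para-differential calculus on $S^2$ compatible with quasi-linear water waves, which, as the author has emphasized in Section 2.3, is not yet available in a usable form; developing it is itself a substantial undertaking. Second, even granted the calculus, the Hamiltonian (quasi-)integrability argument on $\operatorname{Ran}\Pi_c$ is a delicate Birkhoff-theoretic statement whose validity may itself depend on arithmetic conditions attached to the resonant triples of (\ref{Diophantine}); the admissible value of $\alpha$ may a priori depend on that structure, and verifying the non-degeneracy required for action-angle reduction may raise further Diophantine questions at fourth order, echoing the pattern already observed for the quadratic resonances.
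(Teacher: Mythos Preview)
This statement is a \emph{conjecture}; the paper does not prove it but offers a heuristic built around Proposition~\ref{Pi_c}. Your proposal is likewise a heuristic plan, and the overall architecture matches the paper's: normal form $u\mapsto v=u+\mathbf{B}[u,u]$ with coefficients (\ref{B123}), reduction to a system in which only the finite-rank resonant projection $\Pi_c$ carries a quadratic forcing, and closure of a bootstrap in $H^s$ using the conserved Hamiltonian.

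The substantive divergence is in how the residual quadratic piece $\Pi_c\mathcal{Q}(v,\bar v)$ is handled. You extract from the Hamiltonian only the coercivity bound $\|\Pi_cv\|_{L^2}\le\|v\|_{L^2}\lesssim\varepsilon$, correctly observe that plugging this into the energy inequality yields an $O(\varepsilon^3)$ source and hence only an $\varepsilon^{-1}$ lifespan, and then propose to recover extra time via a Birkhoff-type argument on the finite-dimensional resonant block (approximate conservation of $H_3^{\mathrm{res}}$, action-angle structure). The paper instead squeezes much more out of the Hamiltonian: using $\mathbf{H}[\zeta,\phi]\equiv4\pi$ together with the volume and momentum constraints, Proposition~\ref{Pi_c} obtains $\|u\|_{L^2}^2=O(\varepsilon^3)$ and hence, by the finite-rank Bernstein step you also invoke, $\|\Pi_cv\|_{H^s}\lesssim\varepsilon^{3/2}$ \emph{uniformly in $T$}. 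With this improved bound the resonant quadratic term contributes $O(\varepsilon^{7/2})$ to the energy inequality, on par with the cubic terms, and no further dynamical argument on $\operatorname{Ran}\Pi_c$ is needed: the $(1-\Pi_c)$ part alone drives the lifespan to $\varepsilon^{-2}$. In short, the paper's route is more elementary and avoids your last, most speculative step entirely.

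That last step is also where your plan is weakest on its own terms. The observation that $\{H_2,H_3^{\mathrm{res}}\}=0$ is correct, but conservation of the single scalar $H_3^{\mathrm{res}}$ up to $O(v^{\otimes4})$ does not by itself confine the secular drift of the individual resonant modes; one would need enough commuting integrals (or a genuine integrability statement) on $\operatorname{Ran}\Pi_c$, and you acknowledge this may hinge on further arithmetic conditions. By contrast, the paper's mechanism requires nothing of the resonant dynamics beyond $\Pi_c$ having finite rank. The trade-off is that the paper's argument leans on $\mathbf{H}\equiv4\pi$, i.e.\ data lying exactly on the minimal-energy surface; your coercivity-only use of the Hamiltonian would be more robust to generic $\varepsilon$-data, but as written it does not close.
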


Let's explain the heuristic as follows. The argument we aim to implement is the standard ``continuous induction method", i.e. for some suitably large $s$ and $K$ and suitable $\alpha>0$, assuming $T=\varepsilon^{-(1+\alpha)}$ and $\sup_{t\in[0,T]}\|v\|_{H^s(g_0)}\leq K\varepsilon$, we try to prove a better bound $\sup_{t\in[0,T]}\|v\|_{H^s(g_0)}\leq K\varepsilon/2$. Here $g_0$ is the standard metric on $S^2$. It is intuitive to expect such a result for the cubic equation ${\partial_t}(1-\Pi_c)v=O(v^{\otimes 3})$. As for the quadratic equation ${\partial_t}\Pi_cv=O(v^{\otimes 2})$, it is crucial to implement the conservation of energy $\mathbf{H}[\zeta,\phi]\equiv4\pi$ for a solution. We summarize it as 
\begin{proposition}\label{Pi_c}
Fix $T>0$. Let $u$ be a smooth solution of (\ref{EQFormal}) and $v=u+\mathbf{B}[u,u]$ be as above. Suppose for some suitably large $s$ and $K$, there holds
$$
\sup_{t\in[0,T]}\|v\|_{H^s(g_0)}\leq K\varepsilon
$$
with $\varepsilon$ sufficiently small. Then there is in fact a better bound for the low frequency part $\Pi_cv$:
$$
\sup_{t\in[0,T]}\|\Pi_cv\|_{H^s(g_0)}\leq K\varepsilon/4.
$$
\end{proposition}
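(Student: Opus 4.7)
The strategy is to exploit the conservation of the Hamiltonian (\ref{SHamilton}) together with the geometric constraints (\ref{Vol})-(\ref{SphereCons.}) to obtain a \emph{$K$-independent} bound $\|v\|_{L^2}\leq C\varepsilon$, and then to upgrade it to $H^s$ on the range of $\Pi_c$ by finite-dimensionality. Indeed, $\Pi_c$ projects orthogonally onto $\mathcal{E}^{(0)}\oplus\mathcal{E}^{(1)}$ together with the finitely many $\mathcal{E}^{(n_3)}$ indexed by third components of the solutions of (\ref{Diophantine}); its image is a finite-dimensional subspace of $L^2(S^2)$, and therefore
\[
\|\Pi_c v\|_{H^s(g_0)}\leq C_s\|\Pi_c v\|_{L^2(S^2)}\leq C_s\|v\|_{L^2(S^2)},
\]
with $C_s$ depending only on $s$ and on the largest resonant $n_3$. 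Once the uniform bound $\|v\|_{L^2}\leq C\varepsilon$ is in hand, choosing $K\geq 4C_sC$ closes the bootstrap and gives the announced $K\varepsilon/4$ inequality.

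To obtain the $L^2$-bound on $v$, I first pass from $v$ back to $u$: since $v=u+\mathbf{B}[u,u]$ with $\mathbf{B}$ bilinear, a tame bilinear estimate $\|\mathbf{B}[u,u]\|_{H^s}\lesssim\|u\|_{H^s}^2$ together with a local inversion near zero turns the hypothesis $\|v\|_{H^s}\leq K\varepsilon$ into $\|u\|_{H^s}\leq 2K\varepsilon$ provided $\varepsilon$ is small enough relative to $K$. Expanding $\mathbf{H}[\zeta,\phi]$ to second order around the unit sphere (using $H_0=2$ and $|II|^2=2$) then gives
\[
\mathbf{H}[\zeta,\phi]-4\pi=\mathbf{H}_{(2)}+\mathbf{H}_{(\geq 3)},\qquad \mathbf{H}_{(2)}=\frac{1}{2}\int_{S^2}\bigl(|\nabla_0\zeta|^2-2\zeta^2+\phi D[0]\phi\bigr)\,d\mu_0,
\]
which diagonalises in spherical harmonics into $2\mathbf{H}_{(2)}=-2\|\Pi^{(0)}u\|_{L^2}^2+\sum_{n\geq 2}\|\Pi^{(n)}u\|_{L^2}^2$ once one inserts the definition of $u$. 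Conservation gives $\mathbf{H}(t)=\mathbf{H}_0$, and the $\varepsilon$-smallness of the initial data yields $|\mathbf{H}_0-4\pi|\leq C_0\varepsilon^2$, while a tame cubic estimate provides $|\mathbf{H}_{(\geq 3)}(t)|\leq C_1\|u(t)\|_{H^s}^3\leq 8C_1K^3\varepsilon^3$. Combining these inequalities produces the $K$-independent bound $\|\Pi_{\geq 2}u(t)\|_{L^2}^2\leq 4C_0\varepsilon^2$ as soon as $K^3\varepsilon$ is small enough.

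The low-frequency modes $\Pi^{(0)}u$ and $\Pi^{(1)}u$ are controlled directly from (\ref{Vol}) and (\ref{SphereCons.}): expanding the volume identity $\tfrac{1}{3}\int(1+\zeta)^3d\mu_0=4\pi/3$ produces $|\Pi^{(0)}\zeta|\lesssim\|\zeta\|_{L^2}^2$, and similarly the two identities in (\ref{SphereCons.}) force $\|\Pi^{(1)}\zeta\|_{L^2}+\|\Pi^{(1)}\phi\|_{L^2}\lesssim(\|\zeta\|_{L^2}+\|\phi\|_{L^2})^2\lesssim K^2\varepsilon^2$. Assembling the three pieces gives $\|u\|_{L^2}\leq 2\sqrt{C_0}\,\varepsilon+O(K^2\varepsilon^2)\leq C\varepsilon$ for $\varepsilon$ small, and a final application of the bilinear bound $\|\mathbf{B}[u,u]\|_{L^2}\lesssim\|u\|_{H^s}^2\leq 4K^2\varepsilon^2$ yields $\|v\|_{L^2}\leq C\varepsilon+C'K^2\varepsilon^2\leq 2C\varepsilon$, which is the $K$-independent $L^2$-bound required to close the argument.

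The main obstacle will be the tame bilinear estimate $\|\mathbf{B}[u,u]\|_{H^s}\lesssim\|u\|_{H^s}^2$ on the sphere. The bilinear symbols $\mathcal{M}_1,\mathcal{M}_2,\mathcal{M}_3$ come from the paradifferential linearization of the Dirichlet--Neumann operator $D[\zeta]$ and the mean curvature $H(\iota)$ inside $\mathfrak{N}(u)$, and the coefficients $b_j(n_1,n_2,n_3)$ from (\ref{B123}) only satisfy the small-divisor bound $|b_j|\lesssim(\min n_i)^{-9/2}$; establishing a tame estimate therefore demands a substantive piece of paradifferential calculus on $S^2$ together with a Littlewood--Paley framework adapted to spherical harmonics --- precisely the harmonic-analytic programme flagged as missing at the end of Section 2.3. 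The tame cubic estimate $|\mathbf{H}_{(\geq 3)}|\lesssim\|u\|_{H^s}^3$ is comparatively standard in spirit, along the lines of \cite{ABZ2011}, but still requires analytic expansion of $D[\zeta]$ and $H(\iota)$ in $\zeta$ on the curved background, with uniform control of the remainder in the Sobolev scale of $S^2$.
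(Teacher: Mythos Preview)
Your argument follows the same architecture as the paper's: expand the conserved Hamiltonian to second order, use the volume and momentum constraints to control the $n=0,1$ modes, and finish with a Bernstein inequality on the finite-dimensional range of $\Pi_c$. The skeleton is right.

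There are two points where you diverge from the paper, and both are worth flagging. First, you invoke ``the $\varepsilon$-smallness of the initial data'' to get $|\mathbf{H}(0)-4\pi|\leq C_0\varepsilon^2$ with $C_0$ independent of $K$; but this is \emph{not} part of the stated hypothesis, which only gives $\sup_{[0,T]}\|v\|_{H^s}\leq K\varepsilon$. The paper sidesteps this by using the standing assumption $\mathbf{H}[\zeta,\phi]\equiv 4\pi$ recorded after (\ref{SHamilton}); with that exact identity the quadratic energy $\int(|\nabla_0\zeta|^2-2\zeta^2)+\int||\nabla_0|^{1/2}\phi|^2$ is itself $O(u^{\otimes3})$, hence $\|u\|_{L^2}^2\lesssim K'^3\varepsilon^3$ rather than merely $\lesssim\varepsilon^2$. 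This yields $\|\Pi_cv\|_{H^s}\lesssim\varepsilon^{3/2}$, which beats $K\varepsilon/4$ for $\varepsilon$ small without any separate smallness assumption on the data and without having to tune $K$ against the Bernstein constant $C_s$. Your route still closes in the intended bootstrap context (where the data \emph{is} $\varepsilon$-small independently of $K$), but as a proof of the proposition as stated you are smuggling in an extra hypothesis.

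Second, you assume a tame bound $\|\mathbf{B}[u,u]\|_{H^s}\lesssim\|u\|_{H^s}^2$ with no loss, and then correctly identify this as the hard step. The paper is more modest: it only asserts $\|v\|_{H^s}\leq K\varepsilon\Rightarrow\|u\|_{H^{s-N_0}}\leq K'\varepsilon$ for some fixed loss $N_0$ coming from the small-divisor growth in (\ref{B123}), and takes $s\gg 2N_0$ so that the cubic remainder estimate still sees enough regularity. Given the $(\min n_i)^{-9/2}$ lower bound you quote, a no-loss tame estimate is almost certainly false, so you should build in the $N_0$-loss as the paper does rather than hope for the sharper inequality.
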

\begin{proof}
We consider the ``approximate" energy functional
$$
\mathbf{H}_0[\zeta,\phi]=4\pi+\int_{S^2}2\zeta\cdot d\mu_0+\frac{1}{2}\int_{S^2}(|\nabla_0\zeta|^2+2|\zeta|^2)d\mu_0+\frac{1}{2}\int_{S^2}\left||\nabla_0^{1/2}|\phi\right|^2 d\mu_0,
$$
where $\mu_0$ is the standard area measure on $S^2$ and $\nabla_0$ is the standard connection on $S^2$. This is nothing but the quadratic approximation to $\mathbf{H}[\zeta,\phi]$ in (\ref{SHamilton}) at $(0,0)$, so there holds $\mathbf{H}_0[\zeta,\phi]=\mathbf{H}[\zeta,\phi]+O(u^{\otimes3})$. Using volume preservation (\ref{Vol}), we obtain $\int_{S^2}\zeta d\mu_0=-\|\zeta\|_{L^2(g_0)}^2+O(\zeta^{\otimes3})$, so summarizing we have
\begin{equation}\label{O_3}
\frac{1}{2}\int_{S^2}(|\nabla_0\zeta|^2-2|\zeta|^2)d\mu_0+\frac{1}{2}\int_{S^2}\left||\nabla_0^{1/2}|\phi\right|^2 d\mu_0
=O(u^{\otimes3}).
\end{equation}
Note that we used the conservation law $\mathbf{H}[\zeta,\phi]\equiv4\pi$. By spectral calculus on $S^2$, we have
$$
\|\zeta\|_{H^1(g_0)}^2
\simeq\|\Pi^{(0)}\zeta\|_{L^2(g_0)}^2+\|\Pi^{(1)}\zeta\|_{L^2(g_0)}^2+\int_{S^2}(|\nabla_0\zeta|^2-2|\zeta|^2)d\mu_0,
$$
and by volume preservation and conservation of momentum (\ref{SphereCons.}) we find
\begin{equation}\label{zetaeq}
\|\zeta\|_{H^1(g_0)}^2
\simeq\int_{S^2}(|\nabla_0\zeta|^2-2|\zeta|^2)d\mu_0
+O(\zeta^{\otimes4}).
\end{equation}

Now, for some $N_0>0$ relating to the loss of regularity caused by $\mathbf{B}$, we may choose $s>>2N_0$. Then if $\|v\|_{H^s(g_0)}\leq K\varepsilon$, it follows that $\|u\|_{H^{s-N_0}(g_0)}\leq K'\varepsilon$, so by (\ref{O_3}) and (\ref{zetaeq}) we have
$$
\|u\|_{L^2(g_0)}^2\simeq\|\zeta\|_{H^1(g_0)}^2+\|\phi\|_{H^{1/2}(g_0)}^2\leq K'\varepsilon^3.
$$
Thus
$$
\begin{aligned}
\|v\|_{L^2(g_0)}
&\leq C\|u\|_{L^2(g_0)}+C\|u\|_{H^{N_0}(g_0)}^2\\
&\leq C\varepsilon^{3/2}+K'\varepsilon^2.
\end{aligned}
$$
Since the spectrum of $\Pi_cv$ is bounded, by Bernstein type inequality we have 
$$
\|\Pi_cv\|_{H^s(g_0)}
\leq C\|v\|_{L^2(g_0)}
\leq K'\varepsilon^{3/2}(1+\varepsilon^{1/2}).
$$
If $\varepsilon$ is sufficiently small then this implies $\|\Pi_cv\|_{H^s(g_0)}\leq K\varepsilon/4$.
\end{proof}
We point out that the above proof is independent from the magnitude of the lifespan $T$, so it is always applicable as long as the cubic equation ${\partial_t}(1-\Pi_c)v=O(v^{\otimes 3})$ is well-understood. There are two crucial points in the proof of Proposition \ref{Pi_c}: the conservation of energy, and that the projection $\Pi_c$ is of finite rank, so that a Bernstein type inequality holds. The last fact holds only if there are finitely many 3-way resonances, i.e. there are only finitely many solutions to the Diophantine equation (\ref{3wayRes}).

Finally, we propose an even more ambitious conjecture concerning global dynamical properties of spherical water droplets, which is again illuminated by observation in hydrodynamical experiments under zero gravity, and also suggested by the results of Berti-Montalto \cite{BM2020}:

\begin{conjecture}\label{Conj3}
If the Diophantine equation (\ref{Diophantine}) has only finitely many solutions, then a KAM type result holds for (\ref{EQ}): there is a family of infinitely many quasi-periodic solutions of (\ref{EQ}), depending on a parameter which takes value in a Cantor-type set.
\end{conjecture}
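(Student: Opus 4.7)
\emph{Proof proposal.} The plan is to adapt the infinite-dimensional reducibility-KAM scheme of Berti--Montalto \cite{BM2020}, developed for water waves on $\mathbb{T}^1$, to the spherical geometry. First I would fix a finite ``tangential'' set $\mathcal{S} \subset \{n \geq 2\}$ and, for each $n \in \mathcal{S}$, a single spherical harmonic basis element $Y_n^{m_n} \in \mathcal{E}^{(n)}$. Writing $u = \sum_{n \in \mathcal{S}} z_n(t) Y_n^{m_n} + w(t, x)$ with $w$ in the $L^2$-orthogonal complement, the coordinates $(z_n)$ provide action-angle variables $(\theta, I) \in \mathbb{T}^{|\mathcal{S}|} \times \mathbb{R}^{|\mathcal{S}|}_+$. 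The aim is to construct, for $\omega$ ranging over a positive-measure Cantor subset of a neighborhood of $\omega_0 := (\Lambda(n))_{n \in \mathcal{S}}$, an embedded invariant torus $i_\omega : \mathbb{T}^{|\mathcal{S}|} \to \text{phase space}$ on which the Hamiltonian flow of (\ref{EQ}) is conjugate to $\theta \mapsto \theta + \omega t$.

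Second, using the finiteness hypothesis on (\ref{Diophantine}), I would enlarge $\mathcal{S}$ to contain the third component of every non-trivial solution of (\ref{3wayRes}); the normal form reduction of Section 2.4 then removes every quadratic monomial with output frequency outside $\mathcal{S}$, and the Birkhoff normal form of the Hamiltonian (\ref{SHamilton}) acquires the shape
$$
N(I, w; \omega) = \omega \cdot I + \tfrac{1}{2}\langle \Omega(I) w, w \rangle + O(\|(I, w)\|^3),
$$
with $\Omega(I)$ a self-adjoint operator on the normal subspace whose eigenvalues are small perturbations of $\{\Lambda(n) : n \notin \mathcal{S}\}$. One then iterates a Nash--Moser--KAM scheme: at each step the linearized operator along an approximate torus must be conjugated to a block-diagonal operator on each $\mathcal{E}^{(n)}$ plus a smoothing remainder, which requires first and second Melnikov conditions
$$
|\omega \cdot \ell + \Lambda(n)|, \; |\omega \cdot \ell + \Lambda(n) \pm \Lambda(n')| \;\geq\; \gamma \langle \ell \rangle^{-\tau},
$$
for all $(\ell, n, n')$ outside a prescribed finite exceptional set.

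The decisive obstacle is analytic: the Berti--Montalto reducibility argument rests on the translation-invariant para-differential calculus on $\mathbb{T}^1$, whereas on $S^2$ one would need a global para-differential calculus on spherical harmonics handling rough coefficients and closed under commutators --- precisely the missing tool flagged at the end of Section 2.3, requiring a substantial extension of Ruzhansky--Turunen \cite{RT2009}. Compounding this, the eigenspace $\mathcal{E}^{(n)}$ has multiplicity $2n + 1$, so one cannot fully diagonalize $\Omega(I)$ and must work with block-diagonal reductions throughout, tracking operator norms per block and ensuring that the tame estimates needed for Nash--Moser survive. On the arithmetic side the situation is comparatively clean: the $\ell = 0$ case of the second Melnikov condition is exactly (\ref{Diophantine}), which the hypothesis removes in finitely many strokes, while the $\ell \neq 0$ cases should yield a positive-measure Cantor set of admissible $\omega$ via the now-standard excision argument used in \cite{DS2004}, \cite{BD2018}, \cite{IoPu2019}, once those finitely many resonant triples have been excluded.
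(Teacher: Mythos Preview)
The statement you are addressing is Conjecture~\ref{Conj3}, and the paper does \emph{not} prove it. It is explicitly presented as an open conjecture, motivated only by hydrodynamical experiments and by analogy with Berti--Montalto \cite{BM2020}; no proof or even proof sketch appears in the paper. There is therefore nothing to compare your proposal against.

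That said, your outline is a reasonable and well-informed plan of attack, and it is consistent with the heuristics the paper itself offers. You correctly identify the Berti--Montalto reducibility-KAM scheme as the natural template, you correctly connect the $\ell=0$ second Melnikov condition to the Diophantine equation (\ref{Diophantine}), and you correctly flag the two main analytic obstructions the paper also emphasizes: the absence of a para-differential calculus with rough coefficients on $S^2$ (end of Section~2.3), and the unbounded multiplicity $2n+1$ of the eigenspaces $\mathcal{E}^{(n)}$, which forces a block-diagonal rather than scalar reduction. You are right that these are genuine gaps, not mere technicalities; the paper makes the same point when it says the Ruzhansky--Turunen theory would need to be substantially extended. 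Your proposal is thus not a proof but an honest research program, and you present it as such.

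One point to sharpen: absorbing all resonant output frequencies into the tangential set $\mathcal{S}$ handles the $\ell=0$ Melnikov obstruction, but the measure estimate for the $\ell\neq 0$ conditions is less automatic than you suggest. In \cite{BD2018} and \cite{IoPu2019} the excision argument relies on a free physical parameter (the ratio $\sigma/g$) entering the dispersion relation non-trivially; here the dispersion relation $\Lambda(n)=\sqrt{n(n-1)(n+2)}$ is rigid, as the paper stresses in Section~2.4. The only parameters available are the amplitudes (equivalently the actions $I$), so the frequency-to-action map must supply the needed transversality. Verifying a twist condition of this kind on $S^2$ is itself nontrivial and should be listed among the open sub-problems.
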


\appendix
\section{MAGMA Code}\label{A}
MAGMA is a large, well-supported software package designed for computations in algebra, number theory, algebraic geometry, and algebraic combinatorics. In this appendix, we give the MAGMA code used to conduct computations on Diophantine equations related to the spherical capillary water waves system. 

\subsection{Integral Points on Elliptic Curve} We can find all integral points on a given elliptic curve over $\mathbb{Q}$ using MAGMA. For a monic cubic polynomial $f(x)$, the function \verb|EllipticCurve(f)| creates the elliptic curve
$$
E:y^2=f(x),
$$
and the function \verb|IntegralPoints(E)| returns a sequence containing all the integral points on $E$ under the homogeneous coordinate of $\mathbb{QP}^2$, modulo negation. We use this to find out all integral points on the elliptic curve 
$$
E_c:\,y^2=x^3+cx^2-2c^2x=x(x-c)(x+2c).
$$
for natural number $c\leq50$. The MAGMA code is listed below, which excludes all the $c$'s such that there are only trivial integral points $\{(-c,0),(0,0),(c,0)\}$ on $E_c$.

\begin{spverbatim}
> Qx<x> := PolynomialRing(Rationals());
> for c in [1..50] do
> E := EllipticCurve(x^3+c*x^2-2*c^2*x);
> S, reps := IntegralPoints(E);
> if # S gt 3 then
> print c, E;
> print S;
> end if;
> end for;
2 Elliptic Curve defined by y^2 = x^3 + 2*x^2 - 8*x over Rational Field
[ (-4 : 0 : 1), (-2 : 4 : 1), (-1 : -3 : 1), (0 : 0 : 1), (2 : 0 : 1), (4 : 8 : 1), (8 : -24 : 1), (50 : 360 : 1) ]
8 Elliptic Curve defined by y^2 = x^3 + 8*x^2 - 128*x over Rational Field
[ (-16 : 0 : 1), (-8 : 32 : 1), (-4 : -24 : 1), (0 : 0 : 1), (8 : 0 : 1), (9 : -15 : 1), (16 : 64 : 1), (32 : -192 : 1), (200 : 2880 : 1) ]
13 Elliptic Curve defined by y^2 = x^3 + 13*x^2 - 338*x over Rational Field
[ (-26 : 0 : 1), (0 : 0 : 1), (13 : 0 : 1), (121 : 1386 : 1) ]
15 Elliptic Curve defined by y^2 = x^3 + 15*x^2 - 450*x over Rational Field
[ (-30 : 0 : 1), (-5 : -50 : 1), (0 : 0 : 1), (15 : 0 : 1), (24 : 108 : 1), (90 : -900 : 1) ]
17 Elliptic Curve defined by y^2 = x^3 + 17*x^2 - 578*x over Rational Field
[ (-34 : 0 : 1), (-32 : -56 : 1), (0 : 0 : 1), (17 : 0 : 1), (833 : 24276 : 1) ]
18 Elliptic Curve defined by y^2 = x^3 + 18*x^2 - 648*x over Rational Field
[ (-36 : 0 : 1), (-32 : -80 : 1), (-18 : 108 : 1), (-9 : -81 : 1), (0 : 0 : 1), (18 : 0 : 1), (36 : 216 : 1), (72 : -648 : 1), (450 : 9720 : 1) ]
22 Elliptic Curve defined by y^2 = x^3 + 22*x^2 - 968*x over Rational Field
[ (-44 : 0 : 1), (-32 : -144 : 1), (0 : 0 : 1), (22 : 0 : 1), (198 : 2904 : 1) ]
23 Elliptic Curve defined by y^2 = x^3 + 23*x^2 - 1058*x over Rational Field
[ (-46 : 0 : 1), (0 : 0 : 1), (23 : 0 : 1), (50 : -360 : 1) ]
26 Elliptic Curve defined by y^2 = x^3 + 26*x^2 - 1352*x over Rational Field
[ (-52 : 0 : 1), (-49 : -105 : 1), (0 : 0 : 1), (26 : 0 : 1), (1300 : 47320 : 1) ]
30 Elliptic Curve defined by y^2 = x^3 + 30*x^2 - 1800*x over Rational Field
[ (-60 : 0 : 1), (-50 : 200 : 1), (-45 : -225 : 1), (-24 : -216 : 1), (-20 : 200 : 1), (-6 : 108 : 1), (0 : 0 : 1), (30 : 0 : 1), (36 : 144 : 1), (40 : -200 :
1), (75 : -675 : 1), (90 : 900 : 1), (300 : 5400 : 1), (324 : -6048 : 1), (480 : -10800 : 1), (7290 : 623700 : 1), (10830 : -1128600 : 1), (226875 : 108070875 : 1) ]
32 Elliptic Curve defined by y^2 = x^3 + 32*x^2 - 2048*x over Rational Field
[ (-64 : 0 : 1), (-32 : 256 : 1), (-16 : -192 : 1), (0 : 0 : 1), (32 : 0 : 1), (36 : -120 : 1), (64 : 512 : 1), (128 : -1536 : 1), (800 : 23040 : 1) ]
33 Elliptic Curve defined by y^2 = x^3 + 33*x^2 - 2178*x over Rational Field
[ (-66 : 0 : 1), (0 : 0 : 1), (33 : 0 : 1), (81 : -756 : 1) ]
35 Elliptic Curve defined by y^2 = x^3 + 35*x^2 - 2450*x over Rational Field
[ (-70 : 0 : 1), (-49 : 294 : 1), (-45 : -300 : 1), (-40 : 300 : 1), (-14 : -196 : 1), (0 : 0 : 1), (35 : 0 : 1), (50 : 300 : 1), (175 : -2450 : 1), (224 : 3528 : 1), (280 : -4900 : 1), (4410 : -294000 : 1), (14450 : -1739100 : 1) ]
39 Elliptic Curve defined by y^2 = x^3 + 39*x^2 - 3042*x over Rational Field
[ (-78 : 0 : 1), (0 : 0 : 1), (39 : 0 : 1), (147 : -1890 : 1) ]
42 Elliptic Curve defined by y^2 = x^3 + 42*x^2 - 3528*x over Rational Field
[ (-84 : 0 : 1), (-56 : -392 : 1), (-12 : 216 : 1), (0 : 0 : 1), (42 : 0 : 1), (63 : -441 : 1), (294 : 5292 : 1) ]
43 Elliptic Curve defined by y^2 = x^3 + 43*x^2 - 3698*x over Rational Field
[ (-86 : 0 : 1), (-32 : -360 : 1), (0 : 0 : 1), (43 : 0 : 1) ]
46 Elliptic Curve defined by y^2 = x^3 + 46*x^2 - 4232*x over Rational Field
[ (-92 : 0 : 1), (0 : 0 : 1), (46 : 0 : 1), (26496 : 4316640 : 1) ]
50 Elliptic Curve defined by y^2 = x^3 + 50*x^2 - 5000*x over Rational Field
[ (-100 : 0 : 1), (-50 : 500 : 1), (-25 : -375 : 1), (-4 : 144 : 1), (0 : 0 : 1), (50 : 0 : 1), (100 : 1000 : 1), (200 : -3000 : 1), (1250 : 45000 : 1) ]
Running Magma V2.27-7.
Seed: 821911319; Total time: 2.430 seconds; Total memory usage: 85.16MB.
\end{spverbatim}

\subsection{Classification of Projective Surface} Some basic geometric parameters of the complex projective surface
$$
\begin{aligned}
\mathfrak{V}:[X(X-W)(X+2W)&+Y(Y-W)(Y+2W)-Z(Z-W)(Z+2W)]^2\\
&\quad=4X(X-W)(X+2W)Y(Y-W)(Y+2W)
\end{aligned}
$$
in $\mathbb{CP}^3$ can be computed using MAGMA. The author would like to thank Professor Bjorn Poonen for introducing MAGMA and providing the code listed below.

\begin{verbatim}
> Q:=Rationals();
> P<x,y,z,w>:=ProjectiveSpace(Q,3);
> fx:=x*(x-w)*(x+2*w);
> fy:=y*(y-w)*(y+2*w);
> fz:=z*(z-w)*(z+2*w);
> V:=Surface(P,(fz-fx-fy)^2-4*fx*fy);
> KodairaEnriquesType(V);
2 0 General type
Running Magma V2.27-7.
Seed: 1338492807; Total time: 0.670 seconds; Total memory usage: 32.09MB.
\end{verbatim}

Note that the Kodaira dimension is invariant regardless of the choice of base field, so it is legitimate to choose the base field to be $\mathbb{Q}$ in the above code. The variable $w$ is used to homogenize the equation. The function \verb|KodairaEnriquesType(V)| returns three values for the given projective surface $V$: the first is the Kodaira dimension, the second is irrelevant when the Kodaira dimension is not $-\infty$, 1 or 0, and the third is the Kodaira-Enriquez classification of the surface $X$.


\bibliographystyle{alpha}
\bibliography{References}

\newcommand{\etalchar}[1]{$^{#1}$}
\begin{thebibliography}{DIPP17}

\bibitem[AB15]{AB2015}
Thomas Alazard and Pietro Baldi.
\newblock Gravity capillary standing water waves.
\newblock {\em Archive for Rational Mechanics and Analysis}, 217:741--830,
  2015.

\bibitem[ABZ11]{ABZ2011}
Thomas Alazard, Nicolas Burq, and Claude Zuily.
\newblock On the water-wave equations with surface tension.
\newblock 2011.

\bibitem[AD13]{AD2015}
Thomas Alazard and Jean-Marc Delort.
\newblock Global solutions and asymptotic behavior for two dimensional gravity
  water waves.
\newblock {\em arXiv preprint arXiv:1305.4090}, 2013.

\bibitem[AM09]{AM2009}
Thomas Alazard and Guy M{\'e}tivier.
\newblock Paralinearization of the dirichlet to neumann operator, and
  regularity of three-dimensional water waves.
\newblock {\em Communications in Partial Differential Equations},
  34(12):1632--1704, 2009.

\bibitem[BC12]{BD2012}
Joao~Lucas Barbosa and Manfredo~do Carmo.
\newblock Stability of hypersurfaces with constant mean curvature.
\newblock In {\em Manfredo P. do Carmo--Selected Papers}, pages 221--235.
  Springer, 2012.

\bibitem[BD{\etalchar{+}}18]{BD2018}
Massimiliano Berti, Jean-Marc Delort, et~al.
\newblock {\em Almost global solutions of capillary-gravity water waves
  equations on the circle}.
\newblock Springer, 2018.

\bibitem[BFP22]{BFP2018}
Massimiliano Berti, Roberto Feola, and Fabio Pusateri.
\newblock Birkhoff normal form and long time existence for periodic gravity
  water waves.
\newblock {\em Communications on Pure and Applied Mathematics}, 2022.

\bibitem[BG98]{BeGu1998}
Klaus Beyer and Matthias G{\"u}nther.
\newblock On the cauchy problem for a capillary drop. part i: irrotational
  motion.
\newblock {\em Mathematical methods in the applied sciences},
  21(12):1149--1183, 1998.

\bibitem[BGT04]{BGT2004}
Nicolas Burq, Pierre G{\'e}rard, and Nikolay Tzvetkov.
\newblock Strichartz inequalities and the nonlinear schr{\"o}dinger equation on
  compact manifolds.
\newblock {\em American Journal of Mathematics}, 126(3):569--605, 2004.

\bibitem[BM20]{BM2020}
Massimiliano Berti and Riccardo Montalto.
\newblock {\em Quasi-periodic standing wave solutions of gravity-capillary
  water waves}, volume 263.
\newblock American mathematical society, 2020.

\bibitem[BS21]{BS2021}
Andrew~R Booker and Andrew~V Sutherland.
\newblock On a question of mordell.
\newblock {\em Proceedings of the National Academy of Sciences},
  118(11):e2022377118, 2021.

\bibitem[CHS10]{CHS2010}
Hans Christianson, Vera~Mikyoung Hur, and Gigliola Staffilani.
\newblock Strichartz estimates for the water-wave problem with surface tension.
\newblock {\em Communications in Partial Differential Equations},
  35(12):2195--2252, 2010.

\bibitem[CS07]{CoSh2007}
Daniel Coutand and Steve Shkoller.
\newblock Well-posedness of the free-surface incompressible euler equations
  with or without surface tension.
\newblock {\em Journal of the American Mathematical Society}, 20(3):829--930,
  2007.

\bibitem[DIPP17]{DIPP2017}
Y~Deng, A.~D. Ionescu, B.~Pausader, and F.~Pusateri.
\newblock Global solutions of the gravity-capillary water-wave system in three
  dimensions.
\newblock {\em Acta Mathematica}, 219(2):213--402, 2017.

\bibitem[DS04]{DS2004}
J-M Delort and Jeremie Szeftel.
\newblock Long-time existence for small data nonlinear klein-gordon equations
  on tori and spheres.
\newblock {\em International Mathematics Research Notices},
  2004(37):1897--1966, 2004.

\bibitem[GMS12]{GMS2012}
Pierre Germain, Nader Masmoudi, and Jalal Shatah.
\newblock Global solutions for the gravity water waves equation in dimension 3.
\newblock {\em Annals of Mathematics}, pages 691--754, 2012.

\bibitem[GMS15]{GMS2015}
Pierre Germain, Nader Masmoudi, and Jalal Shatah.
\newblock Global solutions for capillary waves equation in dimension 3.
\newblock {\em Comm. Pure Appl. Math}, 68(4):625--687, 2015.

\bibitem[HIT16]{HIT2016}
John~K Hunter, Mihaela Ifrim, and Daniel Tataru.
\newblock Two dimensional water waves in holomorphic coordinates.
\newblock {\em Communications in Mathematical Physics}, 346(2):483--552, 2016.

\bibitem[IP15]{IP2015}
Alexandru~D Ionescu and Fabio Pusateri.
\newblock Global solutions for the gravity water waves system in 2d.
\newblock {\em Inventiones mathematicae}, 199:653--804, 2015.

\bibitem[IP19]{IoPu2019}
AD~Ionescu and F~Pusateri.
\newblock Long-time existence for multi-dimensional periodic water waves.
\newblock {\em Geometric and Functional Analysis}, 29(3):811--870, 2019.

\bibitem[IT14]{IT2014}
Mihaela Ifrim and Daniel Tataru.
\newblock Two dimensional water waves in holomorphic coordinates ii: global
  solutions.
\newblock {\em arXiv preprint arXiv:1404.7583}, 2014.

\bibitem[Lan05]{Lannes2005}
David Lannes.
\newblock Well-posedness of the water-waves equations.
\newblock {\em Journal of the American Mathematical Society}, 18(3):605--654,
  2005.

\bibitem[MP19]{MP2019}
Rolando Magnanini and Giorgio Poggesi.
\newblock On the stability for alexandrov’s soap bubble theorem.
\newblock {\em Journal d'Analyse Math{\'e}matique}, 139(1):179--205, 2019.

\bibitem[MZ09]{MingZhang2009}
Mei Ming and Zhifei Zhang.
\newblock Well-posedness of the water-wave problem with surface tension.
\newblock {\em Journal de math{\'e}matiques pures et appliqu{\'e}es},
  92(5):429--455, 2009.

\bibitem[RT09]{RT2009}
Michael Ruzhansky and Ville Turunen.
\newblock {\em Pseudo-differential operators and symmetries: background
  analysis and advanced topics}, volume~2.
\newblock Springer Science \& Business Media, 2009.

\bibitem[Sch05]{Sch2005}
Ben Schweizer.
\newblock On the three-dimensional euler equations with a free boundary subject
  to surface tension.
\newblock In {\em Annales de l'IHP Analyse non lin{\'e}aire}, volume~22, pages
  753--781, 2005.

\bibitem[Sha85]{Shatah1985}
Jalal Shatah.
\newblock Normal forms and quadratic nonlinear klein-gordon equations.
\newblock {\em Communications on Pure and Applied Mathematics}, 38(5):685--696,
  1985.

\bibitem[Sog88]{Sogge1988}
Christopher~D Sogge.
\newblock Concerning the lp norm of spectral clusters for second-order elliptic
  operators on compact manifolds.
\newblock {\em Journal of functional analysis}, 77(1):123--138, 1988.

\bibitem[SZ08]{ShZe2008}
Jalal Shatah and Chongchun Zeng.
\newblock Geometry and a priori estimates for free boundary problems of the
  euler's equation.
\newblock {\em Communications on Pure and Applied Mathematics: A Journal Issued
  by the Courant Institute of Mathematical Sciences}, 61(5):698--744, 2008.

\bibitem[Wan20]{Wang2020}
X~Wang.
\newblock Global regularity for the 3d finite depth capillary water waves.
\newblock {\em Annales Scientifiques de l'École Normale Supérieure},
  53(4):847--943, 2020.

\bibitem[Wu97]{Wu1997}
Sijue Wu.
\newblock Well-posedness in sobolev spaces of the full water wave problem in
  2-d.
\newblock {\em Inventiones mathematicae}, 130(1):39--72, 1997.

\bibitem[Wu99]{Wu1999}
Sijue Wu.
\newblock Well-posedness in sobolev spaces of the full water wave problem in
  3-d.
\newblock {\em Journal of the American Mathematical Society}, 12(2):445--495,
  1999.

\bibitem[Wu09]{Wu2009almost}
Sijue Wu.
\newblock Almost global wellposedness of the 2-d full water wave problem.
\newblock {\em Inventiones mathematicae}, 177(1):45--135, 2009.

\bibitem[Wu11]{Wu2011}
Sijue Wu.
\newblock Global wellposedness of the 3-d full water wave problem.
\newblock {\em Inventiones mathematicae}, 184(1):125--220, 2011.

\bibitem[Zak68]{Zakharov1968}
Vladimir~E Zakharov.
\newblock Stability of periodic waves of finite amplitude on the surface of a
  deep fluid.
\newblock {\em Journal of Applied Mechanics and Technical Physics},
  9(2):190--194, 1968.

\end{thebibliography}

\end{spacing}
\end{document}